\title{Quantum reflections, random walks and cut-off}
\author{Amaury Freslon}
\keywords{Cut-off phenomenon, random walks, quantum groups}
\subjclass[2010]{60J05, 60B15, 20G42}
\address{A. Freslon, Laboratoire de Math\'ematiques d'Orsay, Univ. Paris-Sud, CNRS, Universit\'e Paris-Saclay, 91405 Orsay, France}
\email{amaury.freslon@math.u-psud.fr}
\date{}
\theoremstyle{plain}
\newtheorem{thm}{Theorem}[section]
\newtheorem{prop}[thm]{Proposition}
\newtheorem{lem}[thm]{Lemma}
\theoremstyle{definition}
\newtheorem{de}[thm]{Definition}
\theoremstyle{remark}
\DeclareMathOperator{\Arg}{Arg}
\DeclareMathOperator{\ev}{ev}
\DeclareMathOperator{\id}{id}
\DeclareMathOperator{\Irr}{Irr}
\DeclareMathOperator{\Tr}{Tr}
\DeclareMathOperator{\var}{var}
\newcommand{\C}{\mathbb{C}}
\newcommand{\D}{\Delta}
\newcommand{\G}{\mathbb{G}}
\newcommand{\N}{\mathbb{N}}
\newcommand{\R}{\mathbb{R}}
\newcommand{\T}{\mathbb{T}}
\newcommand{\Z}{\mathbb{Z}}
\newcommand{\dd}{\mathrm{d}}
\renewcommand{\O}{\mathcal{O}}
\begin{document}

\begin{abstract}
We study the cut-off phenomenon for random walks on free unitary quantum groups coming from quantum conjugacy classes of classical reflections. We obtain in particular a quantum analogue of the result of U. Porod concerning certain mixtures of reflections. We also study random walks on quantum reflection groups and more generally on free wreath products of finite groups by quantum permutation groups.
\end{abstract}

\maketitle

\section{Introduction}

Recently, J.P. McCarthy initiated in \cite{maccarthy2017random} the study of the cut-off phenomenon for random walks on compact quantum groups, focusing on the finite case. This led us to adress examples in the compact (infinite) case in \cite{freslon2017cutoff}, where we were able to prove a cut-off for several examples inspired by classical random walks on orthogonal and symmetric groups. These examples include a quantum analogue of the uniform plane Kac walk on free orthogonal quantum groups as well as the uniform random walk on the set of quantum transpositions in free symmetric quantum groups. In the present paper, we continue this work by considering other compact quantum groups in connection with complex reflections.

Our first family of examples is free unitary quantum groups. In the classical case, the unitary group can be thought of as a complex version of the orthogonal group. The same analogy works in the quantum setting in a precise sense : the free unitary quantum group is the \emph{free complexification} of the free orthogonal quantum group. This gives us a convenient way of producing central states (i.e.~conjugation-invariant measures) on this quantum group using the free product construction. We analyse the behaviour of the associated random walk when the state is the uniform measure on the conjugacy class of a classical complex reflection. We then consider a "mixture" of these measures which was studied in the classical case by U. Porod in \cite{porod1996cutII} and prove a cut-off with the same parameter, namely $N\ln(N)/2$.

In \cite{freslon2017cutoff}, we were not able to prove a cut-off for arbitrary mixtures of rotations because could not handle the behaviour of our approximations when the angle was close to $0$. We therefore had to assume that the measure governing the mixture had a support bounded away from $0$. Here, since we consider a specific measure, we can do explicit computations even though the support is not bounded away from $0$ and directly prove the cut-off without reducing the problem to individual conjugacy classes of reflections.

Our second family of examples comes from reflection groups. More precisely, J. Bichon introduced in \cite{bichon2004free} a quantum version of the wreath product construction called the \emph{free wreath product} of the dual of a discrete group $\Gamma$ by the free symmetric quantum group $S_{N}^{+}$. When $\Gamma = \Z_{s}$ is cyclic, this object coincides with a quantum analogue of the complex reflection group $H_{N}^{s}$ introduced by T. Banica and R. Vergnioux in \cite{banica2009fusion}. It turns out that assuming $\Gamma$ to be cyclic does not simplify the arguments of the proof of the cut-off phenomenon, so that we will consider in full generality the free wreath product of the dual of a finite group by $S_{N}^{+}$. Two problems arise in this context. First, constructing states on these compact quantum groups is not straightforward. It can nevertheless be done using a quantum group tool called monoidal equivalence and some earlier results on approximation properties for quantum $SU(2)$ groups. Second, the computations are more involved than in the previous examples and we have to make an assumption on the state considered (more precisely on the parameter $\tau$ defining the state) so that we do not get a cut-off statement for all pure central states.

Let us conclude by an outline of the paper. Section \ref{sec:preliminaries} contains some notations and essential results from \cite{freslon2017cutoff}. In Section \ref{sec:unitary} we introduce free unitary quantum groups and prove several cut-off statements. In particular, we obtain in Theorem \ref{thm:cutoffunitary} a cut-off at $N\ln(N)/2$ for a particular mixture of random reflections in perfect analogy with the classical result. In Section \ref{sec:reflection} we turn to free wreath products and prove in Theorem \ref{thm:cutoffwreath} a cut-off for pure central states satisfying a simple condition on the parameter $\tau$ defining the state.

\section{Preliminaries}\label{sec:preliminaries}

We first briefly recall some facts concerning random walks on compact quantum groups, mainly to fix notations. The reader may refer to \cite[Sec 2]{freslon2017cutoff} for details and proofs. A \emph{compact quantum group} $\G$ is given by a Hopf $*$-algebra $\O(\G)$ together with an invariant state $h$, i.e.~a positive linear map $\O(\G)\to\C$ satisfying $h(1) = 1$ and
\begin{equation*}
(h\otimes\id)\circ \D(x) = h(x).1 = (\id\otimes h)\circ\D(x)
\end{equation*}
for all $x\in \O(\G)$, where $\D : \O(\G)\to \O(\G)\otimes\O(\G)$ is the coproduct. The state $h$ is called the \emph{Haar state} of $\G$. A \emph{unitary representation of dimension $n$} of $\G$ is a unitary element $u\in M_{n}(\O(\G))$ such that for all $1\leqslant i, j\leqslant n$,
\begin{equation*}
\D(u_{ij}) = \sum_{k=1}^{n}u_{ik}\otimes u_{kj}.
\end{equation*}
A \emph{morphism} between representations $u$ and $v$ of dimension $n$ and $m$ respectively is a linear map $T : \C^{n}\rightarrow \C^{m}$ such that $(T\otimes \id)u = v(T\otimes \id)$. Two representations are said to be \emph{equivalent} if there is a bijective morphism between them. A representation $u$ is said to be \emph{irreducible} if the only morphisms between $u$ and itself are the scalar multiples of the identity. A quantum analogue of the Peter-Weyl theory ensures that any unitary representation splits as a direct sum of irreducible finite-dimensional ones, so that the latter are enough to describe the representation theory. Denoting by $\Irr(\G)$ the set of equivalence classes of irreducible representations, we define for $\alpha\in \Irr(\G)$ the corresponding \emph{character} $\chi_{\alpha}$ as
\begin{equation*}
\chi_{\alpha} = \Tr(u^{\alpha}) = \sum_{i=1}^{\dim(u^{\alpha})}u^{\alpha}_{ii}
\end{equation*}
for any representative $u^{\alpha}$ of $\alpha$.

In the present work, we are interested in the sequence $(\varphi^{\ast k})_{k\in \N}$ of convolution powers of a state $\varphi$ on the Hopf $*$-algebra $\O(\G)$ of a compact quantum group $\G$, which is recursively defined by $\varphi^{\ast 1} = \varphi$ and
\begin{equation*}
\varphi^{\ast(k+1)} = (\varphi^{\ast k}\otimes\varphi)\circ\D = (\varphi\otimes\varphi^{\ast k})\circ\D.
\end{equation*}
We will estimate how far states are from the Haar state with respect to the \emph{total variation distance}. To define it requires the introduction of some topology. The state $h$ induces a scalar product on $\O(\G)$ through the formula $\langle x, y\rangle_{h} = h(xy^{*})$ which, after separation and completion, yields a Hilbert space $L^{2}(\G)$. The action by left multiplication induces an embedding $\O(\G)\hookrightarrow B(L^{2}(\G))$ yielding a norm  on $\O(\G)$ denoted by $\|\cdot\|_{\infty}$. The weak closure of $\O(\G)$ is then a von Neumann algebra denoted by $L^{\infty}(\G)$.

\begin{de}\label{de:totalvariation}
The total variation distance between two states $\varphi$ and $\psi$ on the von Neumann algebra $L^{\infty}(\G)$ of $\G$ is defined by
\begin{equation*}
\|\varphi - \psi\|_{TV} = \sup_{p\in \mathcal{P}(L^{\infty}(\G))}\vert \varphi(p) - \psi(p)\vert,
\end{equation*}
where $\mathcal{P}(L^{\infty}(\G)) = \{p\in L^{\infty}(\G) \mid p^{2} = p = p^{*}\}$.
\end{de}

The main tool to study this distance is the \emph{Upper Bound Lemma} of P. Diaconis and M. Shahshahani \cite{diaconis1981generating}. We will simply state its quantum version here and refer the reader to \cite[Lem 2.7]{freslon2017cutoff} for details and proofs. For a state $\varphi$ on $\O(\G)$ and an irreducible representation $\alpha\in \Irr(\G)$ of dimension $d_{\alpha}$, we denote by $\widehat{\varphi}(\alpha)$ the matrix $(\varphi(u^{\alpha}_{ij}))_{1\leqslant i, j\leqslant d_{\alpha}}$, where $(u^{\alpha}_{ij})_{1\leqslant i, j\leqslant d_{\alpha}}$ is a representative of $\alpha$.

\begin{lem}[Upper bound lemma]\label{lem:upperbound}
Let $\G$ be a compact quantum group of Kac type and let $\varphi$ be a state on $\G$ which extends to $L^{\infty}(\G)$. Then,
\begin{equation*}
\|\varphi^{\ast k} - h\|_{TV}^{2} \leqslant \frac{1}{4}\sum_{\alpha\in \Irr(\G)\setminus\{1_{\G}\}}d_{\alpha}\Tr\left(\widehat{\varphi}(\alpha)^{* k}\widehat{\varphi}(\alpha)^{k}\right),
\end{equation*}
where $1_{\G} = 1\in M_{1}(\O(\G))$ denotes the trivial representation.
\end{lem}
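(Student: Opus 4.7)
The plan is to mirror the classical Diaconis--Shahshahani argument, which passes from the total variation distance through an $L^{2}$ control to a Plancherel expansion over $\Irr(\G)$.

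First, since $\G$ is of Kac type the Haar state $h$ is tracial, and there is a density $f_{k} \in L^{1}(\G)$ such that $\varphi^{\ast k}(\,\cdot\,) = h(\,\cdot\, f_{k})$. Splitting $f_{k} - 1$ into positive and negative parts via functional calculus shows that the supremum in Definition \ref{de:totalvariation} is achieved on a spectral projection, giving $\|\varphi^{\ast k} - h\|_{TV} = \frac{1}{2}\|f_{k} - 1\|_{1}$. A tracial Cauchy--Schwarz yields $\|f_{k} - 1\|_{1} \leqslant \|f_{k} - 1\|_{2}$, and hence
\begin{equation*}
\|\varphi^{\ast k} - h\|_{TV}^{2} \leqslant \frac{1}{4}\|f_{k} - 1\|_{2}^{2}.
\end{equation*}

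Next, I would invoke the Peter--Weyl orthogonality relations: for a Kac-type compact quantum group, the family $\{\sqrt{d_{\alpha}}\, u^{\alpha}_{ij}\}_{\alpha,i,j}$ is an orthonormal basis of $L^{2}(\G)$. Expanding $f_{k} - 1$ in this basis, the trivial representation contributes nothing since $h(f_{k}) = 1 = h(1)$, and for $\alpha \neq 1_{\G}$ the Fourier coefficients recover the entries of $\widehat{\varphi^{\ast k}}(\alpha)$ up to the natural conjugation convention. Parseval therefore yields
\begin{equation*}
\|f_{k} - 1\|_{2}^{2} = \sum_{\alpha \in \Irr(\G)\setminus\{1_{\G}\}} d_{\alpha}\Tr\bigl(\widehat{\varphi^{\ast k}}(\alpha)^{*}\widehat{\varphi^{\ast k}}(\alpha)\bigr).
\end{equation*}
A short induction from $\D(u^{\alpha}_{ij}) = \sum_{l} u^{\alpha}_{il} \otimes u^{\alpha}_{lj}$ shows that the quantum Fourier transform turns convolution into matrix multiplication, so that $\widehat{\varphi^{\ast k}}(\alpha) = \widehat{\varphi}(\alpha)^{k}$. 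Substituting delivers the claimed inequality.

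The main obstacle is regularity: an arbitrary state extending to $L^{\infty}(\G)$ need not admit an $L^{2}$ density, so the Cauchy--Schwarz step requires justification. I would handle this dichotomously: if the right-hand side of the stated bound is infinite there is nothing to prove, while finiteness of the Parseval sum forces $f_{k} \in L^{2}(\G)$, legitimising the preceding manipulation. The hypothesis that $\varphi$ extends to $L^{\infty}(\G)$ is used precisely to ensure that iterated convolutions are well-defined normal states and that the density $f_{k}$ makes sense as an $L^{1}$-object on which to apply the tracial estimates.
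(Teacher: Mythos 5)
The paper does not reproduce a proof of this lemma, deferring instead to \cite[Lem 2.7]{freslon2017cutoff}; your proposal is the standard quantum adaptation of the Diaconis--Shahshahani argument that that reference contains (tracial $L^{1}$-density, the $\tfrac{1}{2}\|f_{k}-1\|_{1}$ identity via spectral projections, Cauchy--Schwarz, Peter--Weyl orthogonality and Parseval, and $\widehat{\varphi^{\ast k}}(\alpha)=\widehat{\varphi}(\alpha)^{k}$). The argument is correct, and your Riesz--Fischer dichotomy rightly locates the only place where finiteness of the Fourier sum is actually needed, namely to promote $f_{k}-1$ from $L^{1}$ to $L^{2}$.
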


As in \cite{freslon2017cutoff} we will focus on \emph{central states}, for which the matrices $\widehat{\varphi}(\alpha)$ are scalar and determined by the restriction of $\varphi$ to the \emph{central algebra}, i.e.~the $*$-subalgebra of $\O(\G)$ generated by characters. The states which we will study here are closely linked to the free product construction. Free probability theory gives tools to deal with such states. Moreover, because our states are central we only need to compute them on the central algebra, and it turns out that one of these states will be a $*$-homomorphism when restricted to characters. This considerably simplifies computations thanks to the following result :

\begin{lem}\label{lem:moments}
Let $C$ be a $*$-algebra together with a state $\varphi$ and let $A$ and $B$ be $*$-subalgebras which are $*$-free with respect to $\varphi$. If moreover $\varphi_{\vert A}$ is a $*$-homomorphism, then for any $n\in \N$ and any $a_{1}, \cdots, a_{n}\in A$ and $b_{0}, \cdots, b_{n}\in B$,
\begin{equation*}
\varphi(b_{0}a_{1}b_{1}\cdots b_{n-1}a_{n}b_{n}) = \varphi\left(\prod_{i=1}^{n}a_{i}\right)\varphi\left(\prod_{i=0}^{n} b_{i}\right).
\end{equation*}
\end{lem}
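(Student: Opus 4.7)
The plan is to prove the identity by centering all elements and then exploiting freeness, with the $*$-homomorphism hypothesis playing the crucial role of ensuring that products of centered elements of $A$ remain centered. Write $a_i^{\circ} = a_i - \varphi(a_i) \cdot 1$, so that $\varphi(a_i^{\circ}) = 0$; similarly write $b_j^{\circ} = b_j - \varphi(b_j) \cdot 1$. The $*$-homomorphism assumption on $\varphi_{\vert A}$ immediately gives that the kernel $A^{\circ} = \ker(\varphi_{\vert A})$ is a $*$-ideal in $A$, and in particular any product of centered elements of $A$ is again centered: $\varphi(a_1^{\circ} \cdots a_k^{\circ}) = \prod_i \varphi(a_i^{\circ}) = 0$.

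The main step is to establish the auxiliary claim: for any centered $c_1, \ldots, c_m \in A$ (with $\varphi(c_j) = 0$) and any $B_0, \ldots, B_m \in B$, one has $\varphi(B_0 c_1 B_1 \cdots c_m B_m) = 0$. To prove this, expand each $B_j = B_j^{\circ} + \varphi(B_j)$ and distribute; each term of the resulting sum is a scalar times a word of the form $X_1 X_2 \cdots X_s$ where the scalar factors come from the $B_j$'s that were replaced by $\varphi(B_j)$, the surviving $B_j^{\circ}$'s are centered, and the $A$-blocks are products of consecutive $c_l$'s from the original word. By the homomorphism observation these $A$-blocks are themselves centered, and the $X_i$'s alternate between $A$ and $B$, so freeness forces $\varphi(X_1 \cdots X_s) = 0$.

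Given this auxiliary claim, the main formula follows by expanding each $a_i = a_i^{\circ} + \varphi(a_i)$ in $\varphi(b_0 a_1 b_1 \cdots a_n b_n)$. The sum runs over subsets $S \subseteq \{1, \ldots, n\}$ indexing which $a_i$ are replaced by $a_i^{\circ}$. The contribution of $S = \emptyset$ is $\prod_i \varphi(a_i) \cdot \varphi(b_0 b_1 \cdots b_n)$, which equals $\varphi(\prod_i a_i) \varphi(\prod_i b_i)$ by the homomorphism property. For every $S \neq \emptyset$, the corresponding term is a scalar (coming from the $\varphi(a_i)$ with $i \notin S$) times an expression of the form $\varphi(B'_0 a_{i_1}^{\circ} B'_1 \cdots a_{i_k}^{\circ} B'_k)$ with the $B'_l$ obtained by concatenating consecutive $b_j$'s, and this vanishes by the auxiliary claim.

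The main obstacle, and the only place the full strength of the hypothesis is used, is the auxiliary claim: without the homomorphism assumption on $\varphi_{\vert A}$ one cannot guarantee that the concatenated $A$-blocks arising in the expansion remain centered, and freeness alone would not suffice. Everything else is a careful bookkeeping of the expansion over subsets, so the proof is mostly algebraic manipulation once this point is recognized.
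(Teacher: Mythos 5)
Your proof is correct, but it follows a genuinely different route from the paper. The paper uses free cumulants: by the moment-cumulant formula the left-hand side is a sum over non-crossing partitions $\pi$ of $\kappa_\pi$, and the two hypotheses act as orthogonal filters --- freeness kills mixed cumulants, while the $*$-homomorphism hypothesis (via \cite[Prop 11.7]{nica2006lectures}) kills all $A$-cumulants of order at least two --- so only partitions whose blocks on the $a_i$'s are singletons survive, and the remaining sum over non-crossing partitions of the $b_j$'s reassembles into the moment $\varphi(b_0\cdots b_n)$. Your proof avoids cumulants entirely and works directly from the centering definition of freeness: you observe that the homomorphism hypothesis makes $\ker(\varphi_{\vert A})$ multiplicatively closed, so that when the $b_j$'s are centered and consecutive $a_i^\circ$'s collapse into blocks, those blocks remain centered, and the resulting alternating word has vanishing expectation by freeness; the rest is bookkeeping over subsets in the expansion $a_i = a_i^\circ + \varphi(a_i)1$. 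The cumulant proof is shorter given the machinery and makes the role of each hypothesis structurally transparent (each suppresses a class of cumulants), while your argument is more elementary, self-contained, and closer to the raw definition of $*$-freeness --- a reader unfamiliar with Nica--Speicher could follow it without prerequisites. Both are sound; they are complementary presentations of the same fact.
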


\begin{proof}
The argument is almost the same as in the proof of \cite[Thm 14.4]{nica2006lectures} but we sketch it for completeness. Using the moment-cumulant formula (see for instance \cite[Eq 11.8]{nica2006lectures}), we can write
\begin{equation*}
\varphi(b_{0}a_{1}\cdots a_{n}b_{n}) = \sum_{\pi\in NC(2n+1)}\kappa_{\pi}[b_{0}, a_{1}, b_{1}, \cdots, b_{n-1}, a_{n}, b_{n}].
\end{equation*}
Because $A$ and $B$ are $*$-free, mixed cumulants vanish (see \cite[Thm 11.16]{nica2006lectures}). Moreover, because $\varphi$ is a $*$-homomorphism when restricted to $A$, cumulants of order at least two vanish by \cite[Prop 11.7]{nica2006lectures}, so that the term corresponding to a given $\pi$ is $0$ unless its restriction to the $a_{i}$'s consists only in singletons. In that case, we have no constraint on the restriction of $\pi$ to the $b_{i}$'s, so that
\begin{equation*}
\varphi(b_{0}a_{1}b_{1}\cdots b_{n-1}a_{n}b_{n}) = \varphi\left(\prod_{i=1}^{n}a_{i}\right)\sum_{\sigma\in NC(n+1)}\kappa_{\sigma}[b_{0}, b_{1}, \cdots, b_{n}]
\end{equation*}
Using again the moment-cumulant formula, the sum in the right-hand side can be replaced by the corresponding moment and the proof is complete.
\end{proof}

For any $t > 2$, set
\begin{equation*}
q(t) = \frac{t-\sqrt{t^{2}-4}}{2}.
\end{equation*}
This is the unique real number $0 < q(t) < 1$ such that $q(t) + q(t)^{-1} = t$, and the fundamental quantity for all our estimates. Note that
\begin{equation*}
q(t)^{-1} = \frac{t+\sqrt{t^{2}-4}}{2}\geqslant \frac{t}{2}
\end{equation*}
so that $q(t) \leqslant 2/t$. Let us also define a sequence of functions $u_{n}$ by
\begin{equation*}
u_{n}(t) = \frac{q(t)^{-n-1} - q(t)^{n+1}}{q(t)^{-1} - q(t)}.
\end{equation*}
We will use repeatedly the following inequalities (see \cite[Lem 3.3]{freslon2017cutoff} for a proof) :

\begin{lem}\label{lem:encadrement}
For all $n\geqslant 1$ and $t > 2$,
\begin{equation*}
tq(t)^{-(n-1)} \leqslant u_{n}(t)\leqslant\frac{q(t)^{-n}}{1-q(t)^{2}}.
\end{equation*}
\end{lem}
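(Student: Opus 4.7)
The plan is to recognize $u_{n}(t)$ as a finite geometric series in $q = q(t)$ and read off both inequalities from that expression. First I would clear the factor $q^{-1} - q = q^{-1}(1-q^{2})$ in the denominator of the definition, rewriting
\begin{equation*}
u_{n}(t) = \frac{q^{-n} - q^{n+2}}{1-q^{2}} = q^{-n}\cdot\frac{1 - q^{2(n+1)}}{1-q^{2}} = \sum_{k=0}^{n} q^{-n+2k} = q^{-n} + q^{-n+2} + \cdots + q^{n-2} + q^{n}.
\end{equation*}

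For the upper bound, I would simply discard the non-negative term $q^{n+2}$ in the closed form in the middle expression, getting $u_{n}(t) \leqslant q^{-n}/(1-q^{2})$ directly, without even needing the series expansion.

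For the lower bound, I would use the series form: the first two terms combine, via the defining relation $q + q^{-1} = t$, as
\begin{equation*}
q^{-n} + q^{-n+2} = q^{-(n-1)}(q^{-1} + q) = t\cdot q^{-(n-1)},
\end{equation*}
and the remaining terms $q^{-n+4}, \ldots, q^{n}$ are all non-negative, giving $u_{n}(t)\geqslant tq^{-(n-1)}$ with equality when $n=1$ (where the sum is empty and $u_{1}(t) = q^{-1}+q = t$).

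I do not anticipate any real obstacle: both inequalities reduce to one-line manipulations once the Chebyshev-like finite-sum identity for $u_{n}$ is in hand. The only thing to notice is the series rewriting itself; both bounds then drop out by inspection, and the constants in the statement are precisely matched by the first two terms (for the lower bound) and by the positive correction term $q^{n+2}$ (for the upper bound).
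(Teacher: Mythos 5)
Your proof is correct and complete: the rewriting $u_n(t) = \frac{q^{-n}-q^{n+2}}{1-q^2} = q^{-n}+q^{-n+2}+\cdots+q^n$ is exact, the upper bound follows by dropping $q^{n+2}>0$, and the lower bound follows by keeping the first two terms $q^{-n}+q^{-n+2} = q^{-(n-1)}(q^{-1}+q) = tq^{-(n-1)}$ and discarding the rest (which do exist and are positive once $n\geqslant 1$). The present paper does not actually give a proof of this lemma but instead cites \cite[Lem 3.3]{freslon2017cutoff}, so there is no internal argument to compare against; your geometric-series manipulation is the natural elementary route and stands on its own.
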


Let us end this section with a simple lemma which will be useful for the study of lower bounds.

\begin{lem}\label{lem:inequalityforlowerbound}
For $a>0$ and any $N \geqslant 2a$,
\begin{equation*}
N\left(1-\frac{a}{N}\right)^{N\ln(N)/a} \geqslant \frac{e^{-a/2e}}{\sqrt{2}}.
\end{equation*}
\end{lem}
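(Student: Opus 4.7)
The plan is to take logarithms and reduce the claim to showing
\[
\ln h(N) := \ln N + \frac{N\ln N}{a}\ln\!\left(1-\frac{a}{N}\right) \;\geq\; -\frac{a}{2e} - \frac{\ln 2}{2}
\]
for all $N \geq 2a$, where $h(N) = N(1-a/N)^{N\ln N/a}$ is the quantity in the statement. The analytic input I would use is the sharp second-order estimate
\[
\ln(1-x) \;\geq\; -x - \frac{x^{2}}{2(1-x)} \qquad (x \in [0,1)),
\]
verified by noting that both sides vanish at $x=0$ and that the derivative of the difference equals $x^{2}/(2(1-x)^{2}) \geq 0$. Assuming first that $N\geq 1$, so that the coefficient $N\ln N/a$ is nonnegative, applying this bound with $x=a/N$ and multiplying through cancels the linear term against $\ln N$ and leaves
\[
\ln h(N) \;\geq\; -\frac{a\ln N}{2(N-a)}.
\]

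The main algebraic step is then to show that $\frac{a\ln N}{N-a} \leq \frac{a}{e} + \ln 2$ for every $N\geq 2a$. Substituting $s = N/a \geq 2$ rewrites this as
\[
\frac{\ln s + \ln a}{s-1} \;\leq\; \ln 2 + \frac{a}{e},
\]
which I would prove by bounding the two summands separately: first, $\ln s \leq (s-1)\ln 2$ on $[2,\infty)$ since the difference vanishes at $s=2$ and has positive derivative $\ln 2 - 1/s \geq \ln 2 - 1/2 > 0$; second, the classical tangent-line inequality $\ln a \leq a/e$ combined with $s-1 \geq 1$ gives $\ln a \leq (s-1)a/e$. Adding these and dividing by $s-1$ yields the displayed bound, and halving then produces $\ln h(N) \geq -a/(2e) - (\ln 2)/2$, i.e. $h(N) \geq e^{-a/2e}/\sqrt{2}$.

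The residual case $N<1$ (which can arise only when $a<1/2$) must be handled separately because the coefficient $N\ln N/a$ is now negative and would flip the direction of the above inequalities. This case is however immediate: using the elementary $\ln(1-a/N) \leq -a/N$ and multiplying by the negative quantity $N\ln N/a$ gives $\frac{N\ln N}{a}\ln(1-a/N) \geq -\ln N$, so $\ln h(N) \geq 0$, which is more than enough.

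I expect the main obstacle to be the choice of Taylor-style lower bound on $\ln(1-x)$ in the first step: the naive inequality $\ln(1-x) \geq -x - x^{2}$ leaves a residual term of order $a^{2}\ln N / N^{2}$ that cannot be absorbed uniformly in $N$, whereas the sharper bound $-x - x^{2}/(2(1-x))$ yields precisely $a\ln N/(2(N-a))$, which splits cleanly via the substitution $s=N/a$ into the contribution $\ln 2$ (saturated at the boundary $N=2a$) and the contribution $a/e$ (coming from the tangent-line inequality $\ln a \leq a/e$).
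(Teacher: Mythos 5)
Your proof is correct and follows essentially the same route as the paper's: both start from the estimate $\ln(1-x)\geqslant -x - x^{2}/(2(1-x))$ to reduce the claim to $\tfrac{a\ln N}{N-a}\leqslant \tfrac{a}{e}+\ln 2$, and both establish that bound via a two-term split of $\ln N$ together with the tangent-line inequality $\ln x\leqslant x/e$. The only differences are cosmetic — the paper splits $\ln N = \ln(N-a) + \ln(N/(N-a))$ where you substitute $s=N/a$ and split $\ln s + \ln a$ — and your additional handling of the (in practice irrelevant) case $N<1$ is a small bonus the paper glosses over.
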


\begin{proof}
This is elementary and was used directly for instance in \cite{rosenthal1994random} to which we appealed for similar computations in \cite{freslon2017cutoff}. Let us however give a proof for the sake of completeness. Estimates for alternate series yield
\begin{equation*}
\ln(1-x)\geqslant - x - \frac{x^{2}}{2}\frac{1}{1-x} = - x - \frac{x^{2}}{2(1-x)}
\end{equation*}
so that
\begin{equation*}
\left(1-\frac{a}{N}\right)^{N/a} \geqslant \exp\left(-1-\frac{a}{2(N-a)}\right)
\end{equation*}
and
\begin{equation*}
N\left(1-\frac{a}{N}\right)^{N\ln(N)/a} \geqslant \exp\left(-\frac{a\ln(N)}{2(N-a)}\right).
\end{equation*}
The maximum of the function $x\mapsto \ln(x)/x$ is $e^{-1}$, so that
\begin{equation*}
\frac{\ln(N)}{N-a} = \frac{\ln(N-a)}{N-a} + \frac{\ln(N/(N-a))}{N-a} \leqslant e^{-1} + \frac{\ln(2)}{a}
\end{equation*}
and the result follows.
\end{proof}

\section{Free unitary quantum groups}\label{sec:unitary}

The main part of this work is concerned with free unitary quantum groups, which were introduced by S. Wang in \cite{wang1995free} and are in a sense the complex versions of the free orthogonal quantum groups studied in \cite{freslon2017cutoff}. We define them through the associated Hopf $*$-algebra.

\begin{de}
Let $\O(U_{N}^{+})$ be the universal $*$-algebra generated by $N^{2}$ elements $u_{ij}$ such that for all $1\leqslant i, j\leqslant N$,
\begin{equation*}
\displaystyle\sum_{k=1}^{N}u_{ik}u_{jk}^{*} = \delta_{ij} = \sum_{i=1}^{N} u_{ik}^{*}u_{jk}.
\end{equation*}
The formula 
\begin{equation*}
\Delta(u_{ij}) = \sum_{k=1}^{N}u_{ik}\otimes u_{kj}
\end{equation*}
extends to an algebra homomorphism $\Delta : \O(U_{N}^{+})\to \O(U_{N}^{+})\otimes \O(U_{N}^{+})$ and this can be completed into a compact quantum group structure.
\end{de}

This defines the \emph{free unitary quantum group} $U_{N}^{+}$ and the abelianization of $\O(U_{N}^{+})$ is the algebra of regular functions on the classical unitary group $U_{N}$. As mentionned above, $U_{N}^{+}$ can be seen as a "complex version" of $O_{N}^{+}$ and this provides a convenient description of its characters. Let $\T$ be the compact group of complex numbers of modulus one and consider the free product (over $\C$) $\O(O_{N}^{+})\ast\O(\T)$. It has a canonical compact quantum group structure by \cite{wang1995free}. Moreover, denoting by $v_{ij}$ the generators of $\O(O_{N}^{+})$ and by $z$ the identity function on $\T$, it was proven in \cite[Prop 7]{banica1997groupe} that the map $u_{ij}\mapsto v_{ij}z$ for all $1\leqslant i, j\leqslant N$ extends to an isomorphism of compact quantum groups. It then follows from the description of the representation theory of free products in \cite{wang1995free} that the characters of $U_{N}^{+}$ are products of characters of $O_{N}^{+}$ and powers of $z$. A more precise description was given in \cite[Prop 4.3]{vergnioux2013k}, which we reproduce here :

\begin{prop}
Let $(\chi_{n})_{n\in \N}$ denote the characters of the irreducible representations of $O_{N}^{+}$. Then, the characters of the irreducible representations of $U_{N}^{+}$ are the elements
\begin{equation*}
z^{[\epsilon_{0}]_{-}}\chi_{n_{1}}z^{\epsilon_{1}}\cdots z^{\epsilon_{p-1}}\chi_{n_{p}}z^{[\epsilon_{p}]_{+}}
\end{equation*}
where $\epsilon_{0}\in \{-1, 1\}$, $[\epsilon]_{-} = \min(\epsilon, 0)$, $[\epsilon]_{+} = \max(\epsilon, 0)$ and $\epsilon_{i+1} = (-1)^{n_{i}+1}\epsilon_{i}$. Moreover, the dimension of the corresponding representation is given by applying the counit, yielding
\begin{equation*}
\varepsilon\left(z^{[\epsilon_{0}]_{-}}\chi_{n_{1}}z^{\epsilon_{1}}\cdots z^{\epsilon_{p-1}}\chi_{n_{p}}z^{[\epsilon_{p}]_{+}}\right) = \prod_{i=1}^{p}d_{n_{i}} = \prod_{i=1}^{p}u_{n_{i}}(N).
\end{equation*}
\end{prop}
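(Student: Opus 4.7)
My plan is to derive this statement directly from the free product description of $\O(U_N^+)$ together with Wang's representation theory of free products from \cite{wang1995free}. The first step is to invoke the general result: the irreducible representations of $\G_1\ast \G_2$ are in bijection with reduced alternating words on $\Irr(\G_1)\setminus\{1_{\G_1}\}\sqcup\Irr(\G_2)\setminus\{1_{\G_2}\}$, with character equal to the product of the characters of the letters. Applied to $O_N^+\ast\T$, whose non-trivial irreducibles are the $\chi_n$ ($n\geq 1$) on the $O_N^+$ side and the $z^k$ ($k\neq 0$) on the $\T$ side, this yields irreducible characters of the form $z^{k_0}\chi_{n_1}z^{k_1}\cdots\chi_{n_p}z^{k_p}$ with each $k_i\in\Z$ (possibly zero at the endpoints) and no two consecutive letters from the same factor.

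Second, I would use that the isomorphism $u_{ij}\mapsto v_{ij}z$ exhibits $\O(U_N^+)$ as the free complexification subalgebra of $\O(O_N^+)\ast\O(\T)$, namely the one generated by the $v_{ij}z$ and their adjoints $z^{-1}v_{ij}$. The key combinatorial observation is that in any word in these generators, each $v$-letter is sandwiched by a single $z^{\pm 1}$ on each side, so that only $z$-exponents $\pm 1$ appear between consecutive $v$'s and the exponents at the two ends of the word belong to $\{-1,0,+1\}$. Consequently, only representations of the stated form can possibly appear in the subalgebra; in the notation of the statement, $[\epsilon_0]_-\in\{-1,0\}$ and $[\epsilon_p]_+\in\{0,+1\}$ record the two boundary possibilities encoded by $\epsilon_0,\epsilon_p\in\{-1,+1\}$.

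Third, I would pin down the parity relation $\epsilon_{i+1}=(-1)^{n_i+1}\epsilon_i$ by decomposing tensor powers of $u$ and $u^*$ into irreducibles with the $O_N^+$ fusion rules $\chi_m\chi_n=\chi_{|m-n|}+\chi_{|m-n|+2}+\cdots+\chi_{m+n}$ and checking which $z$-patterns survive around each $\chi_{n_i}$. The content of the relation is that a copy of $\chi_{n_i}$ embeds in $v^{\otimes n_i}$, hence consumes $n_i$ successive $v$-letters in the word, and each such $v$ toggles the sign of its adjacent $z$'s; the overall orientation across $\chi_{n_i}$ is therefore multiplied by $(-1)^{n_i}$, which (accounting for the convention on $\epsilon$) gives exactly $(-1)^{n_i+1}$. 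Verifying irreducibility and mutual non-equivalence of the listed characters then follows from Wang's classification applied to the subalgebra.

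Finally, the dimension formula is immediate: in the representation category, dimensions multiply, the one-dimensional pieces $z^{\pm 1}$ contribute $1$, and $\dim\chi_n=u_n(N)$ is the known dimension of the $n$-th irreducible of $O_N^+$, obtained by applying the counit $\varepsilon$ and using the recursion defining $u_n$. The main obstacle in this program is the third step: carefully matching the free-product combinatorics with the $O_N^+$ fusion rules to prove that the parity condition is \emph{forced} (not merely one admissible choice) and that the list exhausts $\Irr(U_N^+)$; the remaining ingredients are straightforward applications of results already available in the literature.
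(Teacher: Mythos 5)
The paper does not actually prove this proposition: it is stated as a reproduction of \cite[Prop 4.3]{vergnioux2013k}, after citing \cite[Prop 7]{banica1997groupe} for the free complexification isomorphism $u_{ij}\mapsto v_{ij}z$ and \cite{wang1995free} for the representation theory of free products. So the comparison here is between your from-scratch argument and a literature citation, and your outline is indeed the natural route one would take to prove the cited result.

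That said, let me flag what would need to be tightened. Your step~2 claim that \emph{``each $v$-letter is sandwiched by a single $z^{\pm 1}$ on each side, so that only $z$-exponents $\pm 1$ appear between consecutive $v$'s''} is not literally correct: multiplying $v_{ij}z$ by $z^{-1}v_{kl}$ produces adjacent $v$'s with $z$-exponent $0$ between them, and this is exactly how the blocks $\chi_{n_i}$ with $n_i\geqslant 2$ arise. The correct dichotomy is at the level of the reduced alternating word in $\Irr(O_N^+)\sqcup\Irr(\T)$: between two consecutive $\chi$-letters the $z$-exponent is $\pm 1$ (never $0$, else they would merge), while \emph{inside} a block all intermediate $z$'s cancel. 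In step~3, the inference \emph{``the overall orientation across $\chi_{n_i}$ is multiplied by $(-1)^{n_i}$, which (accounting for the convention on $\epsilon$) gives $(-1)^{n_i+1}$''} is too quick; the sign bookkeeping is where the real content sits. The clean way to see it is to observe that a maximal alternating sub-word of $\{u,\bar u\}$ of length $n$ expands under $u\mapsto vz$, $\bar u\mapsto z^{-1}v$ to $z^{a}v^{n}z^{b}$ with $(a,b)\in\{(0,1),(0,0),(-1,0),(-1,1)\}$ determined by the first letter and the parity of $n$, and then to verify that $b=[\epsilon']_+$, $a=[\epsilon]_-$ with $\epsilon'=\epsilon$ iff $n$ is odd, i.e.\ $\epsilon'=(-1)^{n+1}\epsilon$. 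One must also prove that the ``top'' summand $\chi_n\subset v^{\otimes n}$ is the piece that survives, that consecutive blocks never glue, and that the list is exhaustive — precisely the points you yourself identify as the obstacle. The remaining steps (Wang's classification of $\Irr(G_1\ast G_2)$, multiplicativity of dimensions under the counit, $\dim\chi_n=u_n(N)$) are fine as stated.

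Since the paper offers no proof to compare against, I would recommend either filling in the missing combinatorics as above or, as the paper does, simply citing \cite[Prop 4.3]{vergnioux2013k} with the free complexification reference.
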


We now need to build states on $\O(U_{N}^{+})$ in order to study the associated random walk and this will be done using the free product picture above. Let us choose a state $\varphi$ on $\O(O_{N}^{+})$ and a probability measure $\nu$ on $\T$. Then, the free product state $\psi = \varphi\ast\int\dd\nu$ can be restricted to $\O(U_{N}^{+})$ and yields a state which is central as soon as $\varphi$ is. As shown in Lemma \ref{lem:moments}, free product states are easier to study when one of the states is a $*$-homomorphism on the central algebra so that we should choose $\varphi$ carefully.

As a consequence of \cite[Lem 4.2]{brannan2011approximation}, for any $t\in [0, N[$ there is a state $\varphi_{t}$ on $\O(O_{N}^{+})$ given by $\varphi_{t}(\chi_{n}) = u_{n}(t)/d_{n}$ which is a $*$-homomorphism when restricted to the central algebra $\O(O_{N}^{+})_{0}$. We will therefore study the random walk associated to the state $\varphi_{t, \nu} = \varphi_{t}\ast\int\dd\nu$ and our aim in the next two subsections is to prove that a cut-off phenomenon occurs. The proof will be split into two parts, first the upper bound and then the lower bound. Note that in view of Lemma \ref{lem:moments}, the important quantities are the numbers
\begin{equation}
\frac{\varphi_{t, \nu}(z^{[\epsilon_{0}]_{-}}\chi_{n_{1}}z^{\epsilon_{1}}\cdots z^{\epsilon_{p-1}}\chi_{n_{p}}z^{[\epsilon_{p}]_{+}})}{\varepsilon\left(z^{[\epsilon_{0}]_{-}}\chi_{n_{1}}z^{\epsilon_{1}}\cdots z^{\epsilon_{p-1}}\chi_{n_{p}}z^{[\epsilon_{p}]_{+}}\right)} = m_{\epsilon}(\nu)^{2k}\prod_{i=1}^{p}\frac{u_{n_{i}}(t)^{2k}}{u_{n_{i}}(N)^{2k-2}}
\end{equation}
where $\epsilon = [\epsilon_{0}]_{-} + \epsilon_{1} + \cdots + \epsilon_{p-1} + [\epsilon_{p}]_{+}$ and $m_{\epsilon}(\nu)$ is the $\epsilon$-th moment of $\nu$.

\subsection{Upper bound}

We start with the upper bound. Since $\nu$ is a probability measure on $\T$, its moments all have modulus less than one so that the moments of the characters can be bounded by a quantity which only depends on $t$. This suggests to look for a cut-off parameter independent from $\nu$. In the case of $O_{N}^{+}$, it was shown in \cite{freslon2017cutoff} that writing $t = N-\tau$, the cut-off parameter is exactly $N\ln(N)/\tau$. We will now see that the same formula holds for $U_{N}^{+}$.

Setting $t = N-\tau$ and $k = N\ln(N)/\tau + cN$, we want to prove that the total variation distance between $\varphi^{\ast k}_{t, \nu}$ and $h$ is less that $C_{0}e^{-ct}$ for some constant $C_{0}$. We will use estimates for $O_{N}^{+}$ obtained in \cite[Sec 3]{freslon2017cutoff} which are only valid if $N$ is large enough. In particular, in \cite[Lem 3.9]{freslon2017cutoff} it is proven that setting
\begin{equation*}
C(\tau) = \frac{2}{\tau\sqrt{5}}(2+\sqrt{2 + 9\tau^{2}}),
\end{equation*}
we have
\begin{equation}\label{eq:maininequality}
Nq(N-\tau)(1-q(N-\tau)^{2}) \geqslant e^{\tau/N}
\end{equation}
as soon as $N\geqslant \tau + C(\tau)$. With this in hand we can establish the upper bound.

\begin{prop}\label{prop:unitarycutoff}
For any $N\geqslant \tau + C(\tau)$ and any $c > \ln(2)/2\tau$,
\begin{equation*}
\left\|\varphi_{N-\tau, \nu}^{\ast N\ln(N)/\tau + cN} - h\right\|_{TV} \leqslant \frac{1}{\sqrt{2-4e^{-2c\tau}}}e^{-c\tau}.
\end{equation*}
\end{prop}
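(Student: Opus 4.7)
The plan is to apply Lemma~\ref{lem:upperbound}, exploit the multiplicative structure coming from the free product picture to reduce the estimate to a single geometric series, and then control the result via Lemma~\ref{lem:encadrement} and the key inequality~(\ref{eq:maininequality}).

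Since $\varphi_{t,\nu}$ is central, $\widehat{\varphi_{t,\nu}}(\alpha)$ is a scalar matrix, so the contribution of the irreducible labelled by $(\epsilon_{0},n_{1},\dots,n_{p})$ to the Upper Bound Lemma is precisely the quantity produced by Lemma~\ref{lem:moments} applied to the character formula, namely $|m_{\epsilon}(\nu)|^{2k}\prod_{i}u_{n_{i}}(t)^{2k}/u_{n_{i}}(N)^{2k-2}$. I would bound the trivial factor $|m_{\epsilon}(\nu)|^{2k}\leq 1$ (valid because $\nu$ is a probability measure) and note that each tuple $(n_{1},\dots,n_{p})\in(\N^{\ast})^{p}$ contributes via two choices $\epsilon_{0}\in\{-1,1\}$. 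The resulting double sum then factorises and yields
\begin{equation*}
\|\varphi_{t,\nu}^{\ast k}-h\|_{TV}^{2}\;\leq\;\frac{1}{4}\cdot 2\sum_{p\geq 1}S^{p}\;=\;\frac{S}{2(1-S)},\qquad S:=\sum_{n\geq 1}\frac{u_{n}(t)^{2k}}{u_{n}(N)^{2k-2}},
\end{equation*}
valid provided $S<1$. Matching this with the target bound reduces the problem to proving $S\leq e^{-2c\tau}/(1-e^{-2c\tau})$, which under the hypothesis $c>\ln(2)/(2\tau)$ is automatically less than $1$.

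To bound $S$, set $q=q(N-\tau)$ and $q_{N}=q(N)$. Lemma~\ref{lem:encadrement} gives $u_{n}(t)\leq q^{-n}/(1-q^{2})$ and $u_{n}(N)\geq Nq_{N}^{-(n-1)}$, so each summand of $S$ is dominated by $\tfrac{q_{N}^{-(2k-2)}}{N^{2k-2}(1-q^{2})^{2k}}\cdot r^{n}$ with $r=q^{-2k}q_{N}^{2k-2}$. Summing the geometric series in $n$ collects the $n$-independent prefactors into $\tfrac{q^{-2}}{(1-q^{2})^{2}}\cdot(Nq(1-q^{2}))^{-(2k-2)}$, and inequality~(\ref{eq:maininequality}) bounds the second factor by $e^{-(2k-2)\tau/N}$. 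Plugging $k=N\ln(N)/\tau+cN$ gives $e^{-(2k-2)\tau/N}=e^{-2c\tau+2\tau/N}/N^{2}$, and combined with $q^{-2}\leq(N-\tau)^{2}$ (from $q^{-1}\leq t$), the whole prefactor is close to $e^{-2c\tau}$. A parallel computation shows that $r$ is itself approximately $e^{-2c\tau}$, so the geometric-series correction $(1-r)^{-1}$ contributes exactly the factor $(1-e^{-2c\tau})^{-1}$ needed to match the target.

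The main obstacle is the precise bookkeeping of the constants: the cancellation between $q^{-2}\sim(N-\tau)^{2}$ and the factor $N^{-2}$ produced by inequality~(\ref{eq:maininequality}) must leave no residual $N$-dependence, and the estimate $r\leq e^{-2c\tau}$ needed to control the geometric-series denominator requires a careful expansion of $q^{-2k}q_{N}^{2k-2}$ using identities like $(1-\tau/N)\,e^{\tau/N}\leq 1$. The assumption $N\geq\tau+C(\tau)$ is used exactly once, namely to invoke~(\ref{eq:maininequality}).
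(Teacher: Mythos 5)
Your approach is essentially the paper's, but slightly more direct: you factorise $\sum_{p\geq 1}\sum_{n_1,\ldots,n_p}\prod_i a_{n_i} = \sum_{p\geq 1}S^p$ immediately, whereas the paper detours through a generating-function argument for the inner sum (writing it as $\sum_n \pi_p(n)x^{n-p}$ and bounding by $(x/(1-x))^p$ --- incidentally, that sum over $p$-tuples is really counted by compositions, whose generating function $x^p/(1-x)^p$ is exactly the paper's final bound, so the partition mislabelling is harmless). Both routes reduce the problem to showing $S\leq e^{-2c\tau}/(1-e^{-2c\tau})$, and the target quantities $r=q^{-2k}q_N^{2k-2}\leq e^{-2c\tau}$ (this is precisely \cite[Lem 3.8]{freslon2017cutoff}, no further expansion needed) and the $n$-independent prefactor $\leq e^{-2c\tau}$ are the same as the paper's.

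The bookkeeping you flag as the ``main obstacle'' does close, but not via $q^{-1}\leq N-\tau$: splitting off $q^{-2}/(1-q^2)^2$ and bounding $q^{-2}\leq (N-\tau)^2$ leaves a residual factor $(1-q^2)^{-2}>1$ which $(1-\tau/N)^2 e^{2\tau/N}\leq 1$ does not absorb, so that route loses a little. The clean way is to recombine: after summing the geometric series in $n$, the $n$-independent factor is exactly $\bigl(N^{2k-2}q^{2k}(1-q^2)^{2k}\bigr)^{-1} = N^2\bigl(Nq(1-q^2)\bigr)^{-2k}$, and applying~\eqref{eq:maininequality} once with the full exponent $2k$ gives $\bigl(Nq(1-q^2)\bigr)^{-2k}\leq e^{-2k\tau/N}=e^{-2c\tau}/N^2$, so the $N^2$ cancels exactly with no error term. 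Together with $1/(1-r)\leq 1/(1-e^{-2c\tau})$ this yields $S\leq e^{-2c\tau}/(1-e^{-2c\tau})$ as claimed, and the hypothesis $c>\ln(2)/2\tau$ is exactly what makes $S<1$ and the outer geometric series converge.
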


\begin{proof}
Lemma \ref{lem:upperbound} reduces the problem to the computation of a series where the sum runs over all irreducible representations of $U_{N}^{+}$. Noticing that in an irreducible character, the $\epsilon$-sequence is completely determined by $\epsilon_{0}$ as soon as the irreducible representations $n_{1}, \cdots, n_{p}$ of $O_{N}^{+}$ are fixed, the interesting quantity is :
\begin{equation*}
A_{k}(N-\tau) = \sum_{p=1}^{+\infty}\sum_{n_{1}, \cdots, n_{p}\geqslant 1}\sum_{\epsilon_{0}\in \{-1, 1\}}\vert m_{\epsilon}(\nu)\vert^{2k}\prod_{i=1}^{p}\frac{u_{n_{i}}(N-\tau)^{2k}}{u_{n_{i}}(N)^{2k-2}}.
\end{equation*}
Since $\vert z\vert = 1$ for all $z\in \T$, $\vert m_{\epsilon}(\nu)\vert \leqslant 1$ for all $\epsilon$ and the sum over $\epsilon_{0}$ can be removed and replaced by a multiplicative factor $2$. After these simplifications, the estimates of Lemma \ref{lem:encadrement} yield
\begin{align*}
A_{k}(N-\tau) & \leqslant 2\sum_{p=1}^{+\infty}\sum_{n_{1}, \cdots, n_{p}\geqslant 1}\frac{q(N)^{(2k-2)(\sum n_{i}-p)}}{q(N-\tau)^{2k\sum n_{i}}}\left(\frac{1}{N^{2k-2}(1-q(N-\tau)^{2})^{2k}}\right)^{p} \\ 
 & = 2\sum_{p=1}^{+\infty}q(N-\tau)^{-2kp}\left(\frac{1}{N^{2k-2}(1-q(N-\tau)^{2})^{2k}}\right)^{p}\sum_{n_{1}, \cdots, n_{p}\geqslant 1}\left(\frac{q(N)^{2k-2}}{q(N-\tau)^{2k}}\right)^{\sum n_{i}-p}
\end{align*}
In the last line above, we have for each $p$ a sum over $p$-tuples which only depends on $n = n_{1} + \cdots + n_{p}$. This suggests to rewrite it using integer partitions. If $\pi_{p}(n)$ denotes the number of partitions of $n$ into exactly $p$ parts, the associated generating function is
\begin{equation*}
\sum_{n=p}^{+\infty}\pi_{p}(n)x^{n} = \frac{x^{p}}{(1-x)\cdots(1-x^{p})} = \prod_{i=1}^{p}\frac{x}{1-x^{i}} \leqslant \left(\frac{x}{1-x}\right)^{p}
\end{equation*}
for all $0 < x < 1$. The proof is elementary and we refer the reader for instance to \cite{andrews1998theory}. Thus,
\begin{align*}
A_{k}(N-\tau) & \leqslant 2\sum_{p=1}^{+\infty}\left(\frac{1}{N^{2k-2}q(N-\tau)^{2k}(1-q(N-\tau)^{2})^{2k}}\right)^{p}\sum_{n=p}^{+\infty}\pi_{p}(n)\left(\frac{q(N)^{2k-2}}{q(N-\tau)^{2k}}\right)^{n-p} \\
& \leqslant 2\sum_{p=1}^{+\infty}\left(\frac{1}{N^{2k-2}q(N-\tau)^{2k}(1-q(N-\tau)^{2})^{2k}}\right)^{p}\left(\frac{1}{1-\left(\frac{q(N)^{2k-2}}{q(N-\tau)^{2k}}\right)}\right)^{p}.
\end{align*}
Recall from the proof of \cite[Lem 3.8]{freslon2017cutoff} and Equation \eqref{eq:maininequality} that under our assumption on $N$,
\begin{equation*}
\frac{q(N)^{2k-2}}{q(N-\tau)^{2k}}\leqslant e^{-2\tau c} \text{ and } \frac{1}{N^{2k}q(N-\tau)^{2k}(1-q(N-\tau)^{2})^{2k}}\leqslant \frac{e^{-2\tau c}}{N^{2}}
\end{equation*}
so that
\begin{align*}
A_{k}(\tau) & \leqslant 2\sum_{p=1}^{+\infty}\left(\frac{e^{-2\tau c}}{1-e^{-2\tau c}}\right)^{p} \\
& = \frac{2e^{-2\tau c}}{1 - 2e^{-2\tau c}}
\end{align*}
where the sum is finite because of our assumption on $c$. Applying Lemma \ref{lem:upperbound} then yields the result.
\end{proof}

The statement of Proposition \ref{prop:unitarycutoff} seems incomplete since we have to assume that $c > \ln(2)/2\tau$. This could be removed by using $N\ln(N)/\tau + \ln(2)N/2\tau$ as the cut-off parameter. However, since the second term has strictly lower order than the first one, only $N\ln(N)/\tau$ is meaningful (see for instance Proposition \ref{prop:firstordercutoff} for an illustration of this idea). Note that in \cite[Thm C.1]{porod1996cutII}, U. Porod also has to assume that $c$ is greater than some (non-explicit) constant $c_{0}$.

\subsection{Lower bound}

To complete the proof of the cut-off phenomenon, we will now prove that if $k = N\ln(N)/\tau - cN$, then the total variation distance is close to $1$. One efficient strategy to do this uses the Chebyshev inequality and therefore requires to consider a self-adjoint element. The natural candidate is $\chi = \chi_{1}z + \overline{z}\chi_{1}$. However, if $m_{1}(\nu) = 0 = m_{-1}(\nu)$ (for instance if $\nu$ is the Haar measure of $\T$) then $\varphi_{t, \nu}(\chi) = 0$ and we cannot use this to produce a lower bound. We will therefore use $\chi_{2}$ instead, which makes the computations a bit more involved but works whatever the measure $\nu$ is.

\begin{prop}\label{prop:lowerboundunitary}
Let us set
\begin{equation*}
D(\tau) = \frac{2}{\tau} + 2\tau + \sqrt{\frac{3\tau^{2}}{2} + 3}.
\end{equation*}
Then, for any $N\geqslant D(\tau)$ and any $c > 0$,
\begin{equation*}
\left\|\varphi_{N-\tau, \nu}^{\ast N\ln(N) - cN} - h\right\|_{TV}\geqslant 1 - \frac{405}{8}e^{6\tau}e^{-2c\tau}.
\end{equation*}
\end{prop}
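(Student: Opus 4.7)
The plan is to deploy Chebyshev's inequality with the self-adjoint test function $X = \chi_{2}$ --- the character in $\O(U_N^+)$ of the irreducible representation corresponding, under the free-product isomorphism $\O(U_N^+)\simeq\O(O_N^+)\ast\O(\T)$, to the second irreducible of $O_N^+$ with no $z$-letters. As the discussion preceding the proposition explains, the natural candidate $\chi_1 z + \bar z\chi_1$ has vanishing mean when $m_{\pm 1}(\nu) = 0$, whereas $\chi_{2}$ involves no $z$-letters at all and therefore produces a bound uniform in $\nu$.

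The first step is to compute moments. Under $h$, Schur orthogonality gives $h(\chi_2) = 0$ and $h(\chi_2 \chi_2^{\,*}) = 1$, so $\var_h(\chi_2) = 1$. Under $\varphi_{t,\nu}^{\ast k}$, centrality makes $\widehat{\varphi_{t,\nu}}([2])$ scalar, and Lemma~\ref{lem:moments} applied in the free product (together with $\varphi_t(\chi_n) = u_n(t)$) yields
\[
m_k := \varphi_{t,\nu}^{\ast k}(\chi_2) = \frac{u_2(t)^k}{u_2(N)^{k-1}}.
\]
The fusion rule $\chi_2\chi_2 = 1 + \chi_2 + \chi_4$, which holds in $\O(O_N^+)\subset\O(U_N^+)$ and corresponds to $[2]\otimes[2]\cong[0]\oplus[2]\oplus[4]$ as $U_N^+$-representations, then gives
\[
\varphi_{t,\nu}^{\ast k}(\chi_2^{\,2}) = 1 + m_k + \frac{u_4(t)^k}{u_4(N)^{k-1}},
\]
and hence $s_k^2 := 1 + m_k + u_4(t)^k/u_4(N)^{k-1} - m_k^2$.

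Chebyshev's inequality applied to the spectral projection $P = \mathbb{1}_{[m_k/2,\,+\infty)}(\chi_2) \in L^\infty(U_N^+)$ under each state gives $\varphi_{t,\nu}^{\ast k}(P) \geq 1 - 4 s_k^2/m_k^2$ and $h(P) \leq 4/m_k^2$, so that $\|\varphi_{t,\nu}^{\ast k} - h\|_{TV} \geq 1 - 4(s_k^2+1)/m_k^2$. To tame the variance I would use the algebraic identity $u_4(x) = u_2(x)^2 - x^2$, which is immediate from the recursion $u_{n+1}(x) = x\,u_n(x) - u_{n-1}(x)$: combined with $t < N$, it implies $u_4(t)^k/u_4(N)^{k-1} \leq m_k^2$, so $s_k^2 \leq 1 + m_k$ and $(s_k^2 + 1)/m_k^2 \leq 2/m_k + 2/m_k^2$.

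It remains to lower-bound $m_k = u_2(N)\bigl(u_2(t)/u_2(N)\bigr)^k$. A direct calculation shows $u_2(N-\tau)/u_2(N) \geq 1 - 2\tau/N$ as soon as $N \geq 2/\tau$, which accounts for the $2/\tau$-term in $D(\tau)$. Splitting
\[
\bigl(1 - 2\tau/N\bigr)^k = \Bigl(\bigl(1 - 2\tau/N\bigr)^{N\ln(N)/(2\tau)}\Bigr)^{2} \cdot \bigl(1 - 2\tau/N\bigr)^{-cN}
\]
and applying Lemma~\ref{lem:inequalityforlowerbound} with parameter $a = 2\tau$ to the first factor together with the elementary bound $(1 - 2\tau/N)^{-cN} \geq e^{2c\tau}$ to the second produces $m_k \geq K(\tau)\,e^{2c\tau}$ for an explicit $K(\tau)$. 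The main obstacle is purely quantitative: squeezing the exact constant $\frac{405}{8}e^{6\tau}$ out requires delicate bookkeeping, because each of the inequalities ($u_4 \leq u_2^2$, the $1 - 2\tau/N$ estimate, Lemma~\ref{lem:inequalityforlowerbound} with $a = 2\tau$, and the combination of the two Chebyshev tails) loses a small multiplicative factor, and the remaining terms in $D(\tau)$ are calibrated precisely so that every one of these estimates applies simultaneously.
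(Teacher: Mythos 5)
Your proof follows the same skeleton as the paper's: the test function is $\chi_{2}$, the mean $m_{k}=u_{2}(t)^{k}/u_{2}(N)^{k-1}$ is lower-bounded by $\ln(1-x)$-type estimates (the $2/\tau$-term in $D(\tau)$ arising exactly as you say), and the conclusion is extracted from a two-sided Chebyshev tail bound on a spectral projection. Where you genuinely depart from the paper is the variance estimate. The paper simply uses the crude operator-norm bound $\operatorname{var}_{\varphi^{\ast k}}(\chi_{2})\leqslant\|\chi_{2}\|_{\infty}^{2}=9$; you instead invoke the fusion rule $\chi_{2}^{2}=1+\chi_{2}+\chi_{4}$ together with the identity $u_{4}=u_{2}^{2}-x^{2}$ to obtain $s_{k}^{2}\leqslant 1+m_{k}$. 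Your chain $u_{4}(t)^{k}/u_{4}(N)^{k-1}\leqslant m_{k}^{2}$ is correct but, contrary to what you write, not ``immediate from the recursion'': one needs $u_{4}(t)/u_{4}(N)\leqslant u_{2}(t)^{2}/u_{2}(N)^{2}$, which after clearing denominators reduces to $(N-t)(tN+1)\geqslant 0$ -- true, but a step worth recording. The two variance strategies compensate each other: because your variance grows linearly in $m_{k}$, your dominant error term is $4/m_{k}$, and you therefore need the stronger $m_{k}\gtrsim e^{2c\tau}$, which you get by keeping $(1-2\tau/N)^{-cN}\geqslant e^{2c\tau}$; the paper accepts the weaker $m_{k}\gtrsim e^{c\tau}$ but has a bounded variance, so $1/m_{k}^{2}$ dominates. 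Both routes land on $e^{-2c\tau}$.

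Two calibration issues. First, invoking Lemma~\ref{lem:inequalityforlowerbound} with $a=2\tau$ requires $N\geqslant 4\tau$, whereas for large $\tau$ one has $D(\tau)\approx(2+\sqrt{3/2})\tau<4\tau$; the paper avoids this by applying the underlying $\ln(1-x)$ inequality directly to the exact $x=(2N\tau-\tau^{2})/(N^{2}-1)$ rather than first coarsening to $1-2\tau/N$ and then quoting the lemma. As stated, your proof would require $N\geqslant\max(D(\tau),4\tau)$ rather than $N\geqslant D(\tau)$. Second, your constant will inevitably differ from $\tfrac{405}{8}e^{6\tau}$ (Lemma~\ref{lem:inequalityforlowerbound} with $a=2\tau$ produces $e^{-2\tau/e}$ factors rather than the $e^{-3\tau}$ of the paper); since the proposition commits to a specific constant, a verbatim proof must either reproduce the paper's estimates or adjust the statement. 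These are genuine gaps relative to the precise proposition as stated, though not to the cut-off phenomenon itself.
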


\begin{proof}
Let us set $k_{0} = N\ln(N)/\tau$. The first step is to find a lower bound for the expectation
\begin{equation*}
\varphi_{N-\tau, \nu}^{\ast k_{0}}(\chi_{2}) = \frac{((N-\tau)^{2} - 1)^{k_{0}}}{(N^{2}-1)^{k_{0}-1}} = (N^{2}-1)\left(1-\frac{2N\tau - \tau^{2}}{N^{2}-1}\right)^{N\ln(N)/\tau}.
\end{equation*}
Recall the lower bound used in Lemma \ref{lem:inequalityforlowerbound} :
\begin{equation*}
\ln(1-x)\geqslant - x - \frac{x^{2}}{2(1-x)}
\end{equation*}
which we will now apply. For simplicity, let us study each term separately. Setting $x = (2N\tau - \tau^{2})/(N^{2}-1)$, we have
\begin{equation*}
k_{0}x = \ln(N)\frac{2N^{2} - N\tau}{N^{2} - 1} = 2\ln(N) + \ln(N)\frac{2-N\tau}{N^{2}-1}
\end{equation*}
which is less than $2\ln(N)$ as soon as $N\geqslant 2/\tau$. On the other hand,
\begin{align*}
k_{0}\frac{x^{2}}{2(1-x)} & = N\ln(N)\tau\left(\frac{2N - \tau}{N^{2}-1}\right)^{2}\frac{N^{2} - 1}{2((N-\tau)^{2} - 1)} \\
& = \ln(N)\tau\frac{(2N-\tau)^{2}}{2(N-\tau)^{2} - 2}\frac{N}{N^{2}-1} \\
& = \tau\frac{\ln(N)}{N-1}\frac{N}{N+1}\frac{(2N-\tau)^{2}}{2(N-\tau)^{2} - 2}.
\end{align*}
The first two fractions are less than one for any $N\geqslant 3$. As for the third one, it can be written as
\begin{equation*}
2 + \frac{4\tau N - 3\tau^{2} + 4}{2N^{2} - 4\tau N + 2\tau^{2} - 2}
\end{equation*}
and the fraction appearing above is less than one provided $2N^{2} - 8\tau N + 5\tau^{2} - 6\geqslant 0$, which is satisfied as soon as $N\geqslant 2\tau + \sqrt{3\tau^{2}/2 + 3}$. Summing up, as soon as $N\geqslant D(\tau)$,
\begin{equation*}
\varphi_{N-\tau, \nu}^{\ast N\ln(N)/\tau}(\chi_{2}) \geqslant (N^{2}-1)\exp(-2\ln(N) - 3\tau) = e^{-3\tau}\frac{N^{2}-1}{N^{2}}\geqslant \frac{8e^{-3\tau}}{9}.
\end{equation*}
Moreover, for any $N\geqslant (\tau+\sqrt{\tau^{2}-4})/2$, $2N^{2} - \tau N \geqslant N^{2} - 1$ so that
\begin{equation*}
\left(1 - \frac{2N\tau - \tau^{2}}{N^{2}-1}\right)^{-cN} \geqslant \exp\left(c\frac{2N^{2}\tau - N\tau^{2}}{N^{2}-1}\right)\geqslant e^{c\tau}
\end{equation*}
Gathering both inequalities and the fact that $(\tau+\sqrt{\tau^{2}-4})/2\leqslant \tau\leqslant D(\tau)$, we see that for $N\geqslant D(\tau)$,
\begin{equation*}
\varphi_{N-\tau, \nu}^{\ast N\ln(N)/\tau - cN}(\chi_{2}) = (N^{2}-1)\left(1-\frac{2N\tau - \tau^{2}}{N^{2}-1}\right)^{N\ln(N)/\tau}\left(1-\frac{2N\tau - \tau^{2}}{N^{2}-1}\right)^{-cN}\geqslant \frac{8e^{-3\tau}}{9}e^{c\tau}.
\end{equation*}
We can now apply the strategy of \cite[Prop 3.15]{freslon2017cutoff}, bounding the variance of $\chi_{2}$ by  $\|\chi_{2}\|_{\infty}^{2} = 9$. Since $h(\chi_{2}) = 0$ and $h(\chi_{2}^{2}) = 1$, this yields
\begin{equation*}
\left\|\varphi_{N-\tau, \nu}^{\ast N\ln(N)/\tau - cN} - h\right\|_{TV} \geqslant 1-\frac{810}{16}e^{6\tau}e^{-2c\tau}
\end{equation*}
from which the result follows.
\end{proof}

Gathering the upper and lower bound yields the announced cut-off phenomenon.

\begin{thm}\label{thm:cutoffunitary}
Let $\tau > 0$ and let $\nu$ be a probability measure on the circle. Then, for $N\geqslant \max(\tau + C(\tau), D(\tau))$ the random walk associated to $\varphi_{N-\tau, \nu}$ has a cut-off at $N\ln(N)/\tau$ steps.
\end{thm}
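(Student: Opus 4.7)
The plan is that Theorem \ref{thm:cutoffunitary} is essentially a direct packaging of the two preceding propositions, which already contain all the analytic content. Under the joint hypothesis $N \geqslant \max(\tau + C(\tau), D(\tau))$ both Proposition \ref{prop:unitarycutoff} and Proposition \ref{prop:lowerboundunitary} apply to the same value of $N$, and what remains is only to verify that the bounds they supply fit the standard shape of a cut-off statement.

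First I would record the definition of a cut-off at time $t_{N} = N\ln(N)/\tau$ with window of order $N$: for every $\varepsilon > 0$ there must exist $c_{\varepsilon} > 0$ such that, for all sufficiently large $N$ and all $c > c_{\varepsilon}$,
\[
\left\|\varphi_{N-\tau, \nu}^{\ast (t_{N} + cN)} - h\right\|_{TV} \leqslant \varepsilon \quad \text{and} \quad \left\|\varphi_{N-\tau, \nu}^{\ast (t_{N} - cN)} - h\right\|_{TV} \geqslant 1 - \varepsilon.
\]

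For the upper half, I would invoke Proposition \ref{prop:unitarycutoff} to obtain, whenever $c > \ln(2)/2\tau$,
\[
\left\|\varphi_{N-\tau, \nu}^{\ast (t_{N} + cN)} - h\right\|_{TV} \leqslant \frac{e^{-c\tau}}{\sqrt{2-4e^{-2c\tau}}},
\]
and observe that the right-hand side tends to $0$ as $c \to \infty$; the restriction $c > \ln(2)/2\tau$ is harmless since the cut-off definition only concerns asymptotics in $c$. Symmetrically, Proposition \ref{prop:lowerboundunitary} yields, for every $c > 0$,
\[
\left\|\varphi_{N-\tau, \nu}^{\ast (t_{N} - cN)} - h\right\|_{TV} \geqslant 1 - \frac{405}{8}e^{6\tau}e^{-2c\tau},
\]
whose right-hand side tends to $1$ as $c \to \infty$.

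Combining the two estimates delivers the announced cut-off at $N\ln(N)/\tau$ steps for every $N \geqslant \max(\tau + C(\tau), D(\tau))$. There is no genuine obstacle left: the analytic difficulties — geometric summation of the Upper Bound Lemma series over the irreducible characters of $U_{N}^{+}$ and the Chebyshev/variance estimate for $\chi_{2}$ — have already been handled in the two propositions. The mild shift $\ln(2)N/2\tau$ hidden in the upper-bound hypothesis lies in the lower-order part of the window, as explained in the remark following Proposition \ref{prop:unitarycutoff}, and therefore does not affect the leading-order cut-off time $N\ln(N)/\tau$.
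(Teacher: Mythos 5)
Your proposal is correct and follows the same route as the paper, which disposes of the theorem in one sentence (``gathering the upper and lower bound yields the announced cut-off phenomenon'') by combining Propositions \ref{prop:unitarycutoff} and \ref{prop:lowerboundunitary}. Your spelling out of the cut-off definition and the observation that the constraint $c > \ln(2)/2\tau$ is absorbed into the asymptotics in $c$ are accurate elaborations of what the paper leaves implicit.
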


The striking fact in this theorem is of course that the probability measure $\nu$ has no impact on the threshold. Indeed, since the moments of $\nu$ are bounded by one, they cannot increase the cut-off parameter in the sense that $A_{k}(N-\tau)$ is always less than its version without $\nu$. However, they could improve the cut-off parameter by making the sum converge faster. This does not happen because the even characters of $O_{N}^{+}$ are also characters of $U_{N}^{+}$ and have the correct lower bound.

\subsection{Random reflections}

We now want to take a step further and study random walks coming from classical unitary groups. To our knowledge, the only cut-off phenomenon for unitary groups was proved by U. Porod in \cite{porod1996cutII} for random walks generated by reflections. More precisely, any reflection is unitarily conjugate to a diagonal matrix with coefficients $(z, 1, \cdots, 1)$ for some $z\in \T$. Thus, given a measure $\mu$ on the circle one can pick $z$ at random according to $\mu$ and then conjugate it by a Haar distributed random unitary matrix to produce a "random reflection" giving one step of the random walk.

\subsubsection{One conjugacy class}

In the quantum setting, we start with the random walk on one conjugacy class, whose construction is explained in \cite{freslon2017cutoff}. If $g\in U_{N}$ is a unitary matrix, the evaluation map $\ev_{g} : u_{ij}\mapsto g_{ij}$ induces a state on the central subalgebra of $\O(U_{N}^{+})$ which in turn yields a central state on the whole of $\O(U_{N}^{+})$. Note that since $\ev_{g}$ is a $*$-homomorphism, it is completely determined by its image on $\chi_{1}z$, which is $\Tr(g)$. Thus, it coincides with $\varphi_{\vert\Tr(g)\vert, \delta_{\Arg(\Tr(g))}}$ since this is also a $*$-homomorphism on the central algebra.

By Theorem \ref{thm:cutoffunitary}, the corresponding cut-off parameter therefore only depends on the modulus of the trace of $g$. Setting $z = e^{i\theta}$, we get $\vert\Tr(g)\vert^{2} = N^{2} - 2N(1-\cos(\theta)) + 2(1-\cos(\theta))$ so that
\begin{equation*}
N - \vert\Tr(g)\vert = N - \sqrt{N^{2} - 2N(1-\cos(\theta)) + 2(1-\cos(\theta))} = 1-\cos(\theta) + O\left(\frac{1}{N}\right).
\end{equation*}
This means that the leading term in $N\ln(N)/\tau$ is $N\ln(N)/(1-\cos(\theta))$ so that this should be the real cut-off parameter. Let us prove this.

\begin{prop}\label{prop:firstordercutoff}
Let $\varphi_{\theta}$ be the state coming from the uniform measure on the quantum conjugacy class of the diagonal matrix $g_{\theta}$ with coefficients $(e^{i\theta}, 1, \cdots, 1)$. Then, for $N\geqslant \max(\tau_{\theta} + C(\tau_{\theta}), D(\tau_{\theta}))$ the random walk associated to $\varphi_{\theta}$ has a cut-off at $N\ln(N)/(1-\cos(\theta))$ steps.
\end{prop}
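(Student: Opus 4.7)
The plan is to reduce the statement to Theorem \ref{thm:cutoffunitary} by showing that the relevant parameter $\tau_{\theta} := N - |\Tr(g_{\theta})|$ differs from $1-\cos(\theta)$ by $O(1/N)$, so that the thresholds $N\ln(N)/\tau_{\theta}$ and $N\ln(N)/(1-\cos(\theta))$ agree up to an $o(N)$ term and therefore describe the same cut-off.

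First I would recall the identification, explained in the paragraph preceding the proposition, that $\varphi_{\theta}$ coincides with $\varphi_{|\Tr(g_{\theta})|, \delta_{\Arg(\Tr(g_{\theta}))}}$, since both states are $*$-homomorphisms on the central algebra and take the same value on $\chi_{1}z$. Thus the random walk associated to $\varphi_{\theta}$ is exactly the one covered by Theorem \ref{thm:cutoffunitary} with $\tau = \tau_{\theta}$ and $\nu = \delta_{\Arg(\Tr(g_{\theta}))}$, provided the hypothesis $N\geqslant \max(\tau_{\theta} + C(\tau_{\theta}), D(\tau_{\theta}))$ is met, which is precisely what is assumed.

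Second I would do the trigonometric computation: $|\Tr(g_{\theta})|^{2} = |e^{i\theta} + N - 1|^{2} = N^{2} - 2(N-1)(1-\cos(\theta))$, so that
\begin{equation*}
\tau_{\theta} = N - |\Tr(g_{\theta})| = \frac{N^{2} - |\Tr(g_{\theta})|^{2}}{N + |\Tr(g_{\theta})|} = \frac{2(N-1)(1-\cos(\theta))}{2N - \tau_{\theta}}.
\end{equation*}
Solving (or simply estimating) yields $\tau_{\theta} - (1-\cos(\theta)) = -(1-\cos(\theta))\frac{2-\tau_{\theta}}{2N-\tau_{\theta}}$, and in particular $|\tau_{\theta} - (1-\cos(\theta))| \leqslant K(\theta)/N$ for some explicit constant $K(\theta)$ depending only on $\theta$, as soon as $N$ is large enough.

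Finally, Theorem \ref{thm:cutoffunitary} gives a cut-off at $N\ln(N)/\tau_{\theta}$ steps. I would then estimate
\begin{equation*}
\left|\frac{N\ln(N)}{\tau_{\theta}} - \frac{N\ln(N)}{1-\cos(\theta)}\right| = N\ln(N)\frac{|(1-\cos(\theta)) - \tau_{\theta}|}{\tau_{\theta}(1-\cos(\theta))} \leqslant \frac{K(\theta)\ln(N)}{\tau_{\theta}(1-\cos(\theta))} = O(\ln(N))
\end{equation*}
which is $o(N)$. Consequently, writing any $k = N\ln(N)/(1-\cos(\theta)) + cN$ as $k = N\ln(N)/\tau_{\theta} + c'N$ with $c' = c + O(\ln(N)/N)$, the upper bound of Proposition \ref{prop:unitarycutoff} (for $c$ large enough) and the lower bound of Proposition \ref{prop:lowerboundunitary} (for $c$ large enough on the other side) transfer verbatim, yielding a cut-off at the announced threshold. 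The only mildly delicate point is to verify that the $O(\ln(N)/N)$ perturbation of the constant does not destroy the hypothesis $c > \ln(2)/(2\tau_{\theta})$ in the upper bound, but this is clear since for $c$ fixed and $N\to\infty$ the perturbation vanishes. This bookkeeping on constants is the only thing requiring care; the core argument is purely the equivalence of thresholds up to $o(N)$.
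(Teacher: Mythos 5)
Your proposal follows essentially the same route as the paper: identify $\varphi_{\theta}$ with $\varphi_{|\Tr(g_{\theta})|,\,\delta_{\Arg(\Tr(g_{\theta}))}}$, show $\tau_{\theta}=N-|\Tr(g_{\theta})|$ and $\lambda_{\theta}=1-\cos\theta$ agree up to $O(1/N)$, and transfer the cut-off at $N\ln(N)/\tau_{\theta}$ from Theorem~\ref{thm:cutoffunitary} to the threshold $N\ln(N)/\lambda_{\theta}$. The only difference is that the paper carries out the constant bookkeeping you flag as needing care (noting $\lambda_{\theta}>\tau_{\theta}\geqslant\lambda_{\theta}(1-1/N)$, so $d_{\theta}=1/\tau_{\theta}-1/\lambda_{\theta}\leqslant 1/(N\tau_{\theta})$, giving the uniform bound $\ln(N)d_{\theta}\tau_{\theta}\leqslant\ln(4)/4$ for $N\geqslant 4$ which is absorbed into the constants of Propositions~\ref{prop:unitarycutoff} and~\ref{prop:lowerboundunitary}); also note the perturbation has sign $c'=c-d_{\theta}\ln(N)<c$, not $c+O(\ln N/N)$ as written, though this does not affect the conclusion.
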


\begin{proof}
For convenience, let us set $\tau_{\theta} = N - \vert\Tr(g_{\theta})\vert$ and $\lambda_{\theta} = 1-\cos(\theta)$ and note that
\begin{equation*}
(N-\tau_{\theta})^{2} = N^{2} - 2N\lambda_{\theta} + 2\lambda_{\theta} > (N-\lambda_{\theta})^{2}
\end{equation*}
so that $\lambda_{\theta} > \tau_{\theta}$. Moreover, using $\sqrt{1-x}\leqslant 1-x/2$ we have
\begin{equation*}
\tau_{\theta} = N\left(1 - \sqrt{1-\frac{2\lambda_{\theta}}{N} + \frac{2\lambda_{\theta}}{N^{2}}}\right) \geqslant \lambda_{\theta} - \frac{\lambda_{\theta}}{N}
\end{equation*}
so that
\begin{equation*}
d_{\theta} = \frac{1}{\tau_{\theta}} - \frac{1}{\lambda_{\theta}} \leqslant \frac{1}{\tau_{\theta}} - \frac{N-1}{N}\frac{1}{\tau_{\theta}} = \frac{1}{N\tau_{\theta}}.
\end{equation*}
In particular, $\ln(N)d_{\theta}\tau_{\theta}\leqslant \ln(4)/4$ for all $N\geqslant 4$ (note that $D(\tau)\geqslant 4$) and
\begin{align*}
\left\|\varphi_{\theta}^{\ast \frac{N\ln(N)}{\lambda_{\theta}} + cN} - h\right\|_{TV} & = \left\|\varphi_{\theta}^{\ast \frac{N\ln(N)}{\tau_{\theta}} + (c-d_{\theta}\ln(N))N} - h\right\|_{TV} \\
& \leqslant \frac{1}{\sqrt{2-4e^{-2c\tau_{\theta}}}}e^{d_{\theta}\tau_{\theta}\ln(N)}e^{-c\tau_{\theta}} \\
& \leqslant \frac{1}{\sqrt{1-2e^{-2c\tau_{\theta}}}}e^{-c\tau_{\theta}} \\
& \leqslant \frac{1}{\sqrt{1-2e^{-3c\lambda_{\theta}/2}}}e^{-3c\lambda_{\theta}/4}.
\end{align*}

As for the lower bound, by Proposition \ref{prop:lowerboundunitary},
\begin{align*}
\left\|\varphi_{\theta}^{\ast N\ln(N)/\lambda_{\theta} - cN} - h\right\|_{TV} & = \left\|\varphi_{\theta}^{\ast N\ln(N)/\tau_{\theta} - (c+d_{\theta}\ln(N))N} - h\right\|_{TV} \\
& \geqslant 1 - \frac{405}{6}e^{6\tau_{\theta}}e^{-2(c+d_{\theta}\ln(N))\tau_{\theta}} \\
& \geqslant 1 - \frac{405}{6}e^{6\tau_{\theta}}e^{-2c\tau_{\theta}}.
\end{align*}
\end{proof}

\subsubsection{Mixed reflections}

In \cite{porod1996cutII}, U. Porod considered a random walk on the classical unitary group $U(N)$ where the angle $\theta$ of the random reflection is chosen using a probability measure $\mu_{N}$ on $[0, 2\pi[$ with density $(4W_{N-1})^{-1}\vert \sin(\theta/2)\vert^{N-1}$, where $W_{N} = \int_{0}^{\pi/2}\sin(\theta)^{N}\dd\theta$ is the $N$-th Wallis integral. A natural idea to study the corresponding problem on $U_{N}^{+}$ would be to try to use our previous results to obtain a cut-off for this random walk and the results of \cite{hough2017cut} together with Proposition \ref{prop:firstordercutoff} tell us that the cut-off parameter should be $N\ln(N)/\lambda$ with
\begin{equation*}
\lambda = \int_{0}^{2\pi}\lambda_{\theta}\dd\mu_{N}(\theta) = \int_{0}^{2\pi} (1-\cos(\theta))\dd\mu_{N}(\theta) = 2\frac{W_{N+1}}{W_{N-1}} = 2\frac{N+1}{N+2} = 2 - \frac{2}{N+2}.
\end{equation*}
Again in the spirit Proposition \ref{prop:firstordercutoff}, the cut-off parameter would then be given by the leading term $N\ln(N)/2$, which is exactly the result of \cite{porod1996cutII} in the classical case. However, as already noticed in the end of Section 3 of \cite{freslon2017cutoff}, adapting the ideas of \cite{hough2017cut} is not enough since our upper bounds are only valid for $N$ larger than some quantity which goes to infinity as $\theta$ goes to $0$.

The proof of the cut-off phenomenon must therefore go through a direct computation. This is what we will do now. For clarity, we will split the computations into several lemmata and we first set some notations. Let $\varphi_{\mu_{N}}$ be the state on $\O(U_{N}^{+})$ defined for $x\in \O(U_{N}^{+})$ by
\begin{equation*}
\varphi_{\mu_{N}}(x) = \int_{0}^{2\pi}\varphi_{g_{\theta}}(x)\dd\mu_{N}(x) = \int_{0}^{2\pi}\varphi_{N-\tau_{\theta}, \delta_{\Arg(\Tr(g_{\theta}))}}(x)\dd\mu_{N}(x),
\end{equation*}
setting as before $\tau_{\theta} = N - \vert\Tr(g_{\theta})\vert$. The strategy is to replace $\tau_{\theta}$ by $\lambda_{\theta} = 1-\cos(\theta)$ to simplify the computations while keeping a control on the difference between the two quantities. The first problem in this approach is that $\varphi_{\mu_{N}}$ is not a free product state any more, hence Lemma \ref{lem:moments} does not apply. However,
\begin{align*}
\vert\varphi_{\mu_{N}}(z^{[\epsilon_{0}]_{-}}\chi_{n_{1}}z^{\epsilon_{1}}\cdots z^{\epsilon_{p-1}}\chi_{n_{p}}z^{[\epsilon_{p}]_{+}})\vert & = \left\vert\int_{0}^{2\pi}\varphi_{g_{\theta}}(z^{[\epsilon_{0}]_{-}}\chi_{n_{1}}z^{\epsilon_{1}}\cdots z^{\epsilon_{p-1}}\chi_{n_{p}}z^{[\epsilon_{p}]_{+}})\dd\mu_{N}(\theta)\right\vert \\
& \leqslant \int_{0}^{2\pi}\left\vert\varphi_{g_{\theta}}(z^{[\epsilon_{0}]_{-}}\chi_{n_{1}}z^{\epsilon_{1}}\cdots z^{\epsilon_{p-1}}\chi_{n_{p}}z^{[\epsilon_{p}]_{+}})\right\vert\dd\mu_{N}(\theta) \\
& \leqslant \int_{0}^{2\pi} \left\vert m_{\epsilon}(\nu)\prod_{i=1}^{p}u_{n_{i}}(N-\tau_{\theta})\right\vert\dd\mu_{N}(\theta) \\
& \leqslant \int_{0}^{2\pi}\prod_{i=1}^{p}u_{n_{i}}(N-\tau_{\theta})\dd\mu_{N}(\theta) \\
\end{align*}
and the same computation as in the beginning of the proof of Proposition \ref{prop:unitarycutoff} shows that it is enough to bound
\begin{equation*}
\int_{0}^{2\pi}\left(\frac{1}{N^{2k-2}q(N-\tau_{\theta})^{2k}(1-q(N-\tau_{\theta})^{2})^{2k}}\right)^{p}\sum_{n_{1}, \cdots, n_{p}\geqslant 1}\left(\frac{q(N)^{2k-2}}{q(N-\tau_{\theta})^{2k}}\right)^{\sum n_{i}-p}\dd\mu_{N}(\theta).
\end{equation*}
for each $p$ and then compute the sum. Moreover, we can replace $\sum n_{i}$ by just one integer $n$ if we multiply by a suitable number of integer partitions. We start by studying the corresponding quantity with $\tau_{\theta}$ replaced by $\lambda_{\theta}$.

\begin{lem}\label{lem:estimatepsi}
Let us set, for $p\geqslant 1$,
\begin{equation*}
B_{p, n}(k) = \int_{0}^{2\pi}\left(\frac{1}{N^{2k-2}q(N-\lambda_{\theta})^{2k}(1-q(N-\lambda_{\theta})^{2})^{2k}}\right)^{p}\left(\frac{q(N)^{2k-2}}{q(N-\lambda_{\theta})^{2k}}\right)^{n-p}\dd\mu_{N}(\theta).
\end{equation*}
Then, writing $a_{N} = N-2+2/N$, we have
\begin{equation*}
B_{p, n}(k) \leqslant \left(\frac{a_{N}^{2k}}{N^{2k-2}(1-q(N-2)^{2})^{2k}}\right)^{p}\left(a_{N}^{2k}q(N)^{2k-2}\right)^{n-p}.
\end{equation*}
\end{lem}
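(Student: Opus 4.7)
The plan is to extract the $\theta$-independent factors of the integrand via monotonicity, and then to reduce the whole estimate to a single moment bound for $\mu_N$.

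First I would exploit that $\lambda_\theta = 1-\cos\theta \in [0,2]$. Since $q$ is decreasing on $(2,\infty)$, one has $q(N-\lambda_\theta) \leqslant q(N-2)$, so that
\begin{equation*}
\frac{1}{(1-q(N-\lambda_\theta)^{2})^{2kp}} \leqslant \frac{1}{(1-q(N-2)^{2})^{2kp}},
\end{equation*}
which is now $\theta$-independent. The factors $N^{2k-2}$ and $q(N)^{2k-2}$ do not depend on $\theta$ either, so after pulling everything out we obtain
\begin{equation*}
B_{p,n}(k) \leqslant \frac{q(N)^{(2k-2)(n-p)}}{N^{(2k-2)p}\,(1-q(N-2)^{2})^{2kp}}\int_{0}^{2\pi} q(N-\lambda_\theta)^{-2kn}\,\dd\mu_N(\theta).
\end{equation*}
Matching this with the claimed inequality, the proof reduces to establishing the single moment estimate
\begin{equation*}
\int_{0}^{2\pi} q(N-\lambda_\theta)^{-2kn}\,\dd\mu_N(\theta) \leqslant a_N^{2kn}.
\end{equation*}

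For this estimate I would use the elementary pointwise bound $q(t)^{-1} = (t+\sqrt{t^{2}-4})/2 \leqslant t$ to further reduce matters to $\int (N-\lambda_\theta)^{m}\,\dd\mu_N \leqslant a_N^{m}$ with $m = 2kn$. Expanding $(N-\lambda_\theta)^m$ binomially in $\lambda_\theta$ and using that the relevant moments are explicit via Wallis integrals, namely $\int \lambda_\theta^{j}\,\dd\mu_N = 2^{j} W_{N-1+2j}/W_{N-1}$, together with the recurrence $W_{n+2}/W_{n} = (n+1)/(n+2)$, one obtains a closed expression for $\int (N-\lambda_\theta)^{m}\,\dd\mu_N$ as an alternating sum of telescoping Wallis ratios. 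The constant $a_N = N - 2 + 2/N$ is tailored so that the binomial expansion of $a_N^{m} = (N - 2 + 2/N)^{m}$ dominates this expression term by term.

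The main obstacle will be this final term-by-term comparison. The intuition behind it is that $\mu_N$ concentrates sharply near $\theta = \pi$ with standard deviation $O(1/\sqrt{N})$, so $N-\lambda_\theta$ typically lives in a small neighbourhood of $N-2$; the correction $2/N$ appearing in $a_N$ is then exactly what is needed to absorb the contribution of the tails of $\mu_N$ near $\theta \in \{0, 2\pi\}$, where $\lambda_\theta$ is small and the integrand $q(N-\lambda_\theta)^{-m}$ is large but the density of $\mu_N$ vanishes to order $N-1$.
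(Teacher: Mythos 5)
Your reductions coincide with the paper's: pull out the $\theta$-independent factors by monotonicity of $q$ on $(2,\infty)$, then use $q(t)^{-1}\leqslant t$ to reduce everything to the single moment inequality
\begin{equation*}
\int_{0}^{2\pi}(N-\lambda_{\theta})^{m}\,\dd\mu_{N}(\theta)\leqslant a_{N}^{m}, \qquad m=2kn,
\end{equation*}
and then try to finish by expanding binomially and using the Wallis recurrence $W_{n+2}/W_{n}=(n+1)/(n+2)$ to compute $\int\lambda_{\theta}^{\ell}\,\dd\mu_{N}=2^{\ell}W_{N-1+2\ell}/W_{N-1}$. This is exactly the paper's route.

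The gap is the step you yourself flag as ``the main obstacle'': the term-by-term comparison does not go through, and the moment inequality displayed above is in fact false in the relevant regime. In the expansion $(N-\lambda_{\theta})^{m}=\sum_{\ell}\binom{m}{\ell}N^{m-\ell}(-1)^{\ell}\lambda_{\theta}^{\ell}$ the signs alternate, so replacing each $\int\lambda_{\theta}^{\ell}\,\dd\mu_{N}$ by an \emph{upper} bound of the form $2^{\ell}r^{\ell}$ pushes the even-$\ell$ terms up but pushes the odd-$\ell$ terms down, and the resulting sum $(N-2r)^{m}$ need not dominate; already for $N=10$, $m=2$ one finds $\int(10-\lambda_{\theta})^{2}\,\dd\mu_{10}\approx 66.99$ whereas $\big(10-2\cdot\tfrac{14}{15}\big)^{2}\approx 66.15$. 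Passing instead to the nonnegative expansion $N-\lambda_{\theta}=(N-2)+(2-\lambda_{\theta})$ does not save matters, since one computes $\int(2-\lambda_{\theta})^{j}\,\dd\mu_{N}=2^{j}\prod_{s=1}^{j}\tfrac{2s-1}{N+2s-1}$, which exceeds $(2/N)^{j}$ for every $j\geqslant 2$, so the trinomial expansion of $a_{N}^{m}=\big((N-2)+2/N\big)^{m}$ does not dominate term by term either. Worse, the reduced moment inequality itself fails once $m$ is large: again at $N=10$, $m=10$ one gets $\int(10-\lambda_{\theta})^{10}\,\dd\mu_{10}\approx 1.398\times 10^{9}$ while $a_{10}^{10}=8.2^{10}\approx 1.374\times 10^{9}$; and since in the lemma $m=2kn$ with $k\sim N\ln N$, one is always in the large-$m$ regime. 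The qualitative reason is the one you allude to: $\mu_{N}$ has a polynomially small but nonzero tail near $\theta=0$ where $N-\lambda_{\theta}$ is close to $N>a_{N}$, and for $m\gg N^{2}$ that tail dominates. So after the crude replacement $q(t)^{-1}\leqslant t$ the estimate cannot be salvaged; any correct proof has to retain the strict gap $q(t)^{-1}<t$ (or the $(1-q^{2})^{-1}$ factors you discarded) rather than reduce to the raw moment bound, and your proposal does not do that.

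For what it is worth, the paper's own proof performs exactly this term-by-term substitution (replacing each $\int\lambda_{\theta}^{\ell}\,\dd\mu_{N}$ by $2^{\ell}\big(\tfrac{N+2\alpha}{N+2\alpha+1}\big)^{\ell}$ inside the alternating binomial sum), so your proposal inherits the same weak point rather than offering a different route around it.
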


\begin{proof}
Using $q(N-\lambda_{\theta})\leqslant q(N-2)$, we can bound $(1-q(N-\lambda_{\theta}))^{-1}$ so that we only have one term left in the integral which depends on $\theta$, namely (noticing that $q(t)^{-1}\leqslant t$)
\begin{equation*}
\int_{0}^{2\pi} q(N-\lambda_{\theta})^{-2kn}\dd\mu_{N}(\theta) \leqslant \int_{0}^{2\pi} (N-\lambda_{\theta})^{2kn}\dd\mu_{N}(\theta).
\end{equation*}
Let us fix an integer $\alpha > 0$. Since $1-\cos(\theta) = 2\sin(\theta/2)^{2}$, we see that for any $\ell\leqslant \alpha$,
\begin{align*}
\int_{0}^{2\pi} \lambda_{\theta}^{\ell}\dd\mu_{N}(\theta) & = \frac{1}{4W_{N-1}}\int_{0}^{2\pi} 2^{\ell}\left\vert\sin\left(\frac{\theta}{2}\right)^{2\ell+N-1}\right\vert\dd\theta \\
& = 2^{\ell}\frac{W_{N-1+2\ell}}{W_{N-1}} \\
& = 2^{\ell}\prod_{s=1}^{\ell}\frac{N+2s}{N+2s+1} \\
& \leqslant 2^{\ell}\left(\frac{N+2\alpha}{N+2\alpha+1}\right)^{\ell}
\end{align*}
so that
\begin{align*}
\int_{0}^{2\pi} (N-\lambda_{\theta})^{\alpha}\dd\mu_{N}(\theta) & \leqslant\sum_{\ell=0}^{\alpha}\binom{\alpha}{\ell}N^{\alpha-\ell}(-1)^{\ell}2^{\ell}\left(\frac{N+2\alpha}{N+2\alpha+1}\right)^{\ell} \\
& = \left(N-2\frac{N+2\alpha}{N+2\alpha+1}\right)^{\alpha} \\
& = \left(N-2+\frac{2}{N+2\alpha+1}\right)^{\alpha} \\
& \leqslant \left(N-2+\frac{2}{N}\right)^{\alpha}.
\end{align*}
Applying this for $\alpha = 2kn$ then yields the announced estimate.
\end{proof}

If we had $q(N-2)^{-1}$ instead of $a_{N}$ in the bound of Lemma \ref{lem:estimatepsi}, then we could use the estimates from \cite[Sec 3]{freslon2017cutoff} to finish the computation. It is therefore natural to try to show that $a_{N}$ is close enough to $q(N-2)^{-1}$ so that the aformentioned estimates will still be valid.

\begin{lem}\label{lem:comparisonanqn}
For any $N\geqslant 4$,
\begin{equation*}
a_{N}q(N-2)\leqslant 1 + \frac{8}{(N-2)^{2}}.
\end{equation*}
\end{lem}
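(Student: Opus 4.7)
The plan is to reduce the inequality to the defining identity for $q$. Writing $M = N-2$, so that $a_{N} = M + 2/(M+2)$, the relation $q(M) + q(M)^{-1} = M$ rearranges to
\[
M q(M) = 1 + q(M)^{2}.
\]
Substituting this into $a_{N} q(M)$ would give
\[
a_{N} q(M) = M q(M) + \frac{2 q(M)}{M+2} = 1 + q(M)^{2} + \frac{2 q(M)}{M+2},
\]
so the claim reduces to proving that
\[
q(M)^{2} + \frac{2 q(M)}{M+2} \leqslant \frac{8}{M^{2}}.
\]

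At this point I would appeal to the bound $q(t) \leqslant 2/t$ recalled right before Lemma \ref{lem:encadrement}, which gives $q(M) \leqslant 2/M$. This immediately yields $q(M)^{2} \leqslant 4/M^{2}$ and, since $M+2 \geqslant M$, also $2q(M)/(M+2) \leqslant 4/(M(M+2)) \leqslant 4/M^{2}$. Summing the two contributions gives the desired $8/M^{2}$.

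There is no genuine obstacle here: the statement is essentially an algebraic identity plus the known crude bound $q(t) \leqslant 2/t$. The only trick worth highlighting is the algebraic rewriting $M q(M) = 1 + q(M)^{2}$, which isolates exactly the ``$1$'' on the right-hand side and leaves a remainder of order $1/M^{2}$ that is small enough to absorb both tail terms. The restriction $N \geqslant 4$ plays no real role beyond ensuring $M \geqslant 2$ so that $q(M)$ is defined (for $M = 2$ one would work with the limit $q(2) = 1$, but the statement is only needed for strictly larger $N$ in the subsequent argument).
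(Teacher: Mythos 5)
Your proposal is correct and takes essentially the same route as the paper: both isolate the constant $1$ via the identity $tq(t) = 1 + q(t)^{2}$ and then absorb the two remainder terms $q(N-2)^{2}$ and $2q(N-2)/N$ using the crude bound $q(t)\leqslant 2/t$, each contributing at most $4/(N-2)^{2}$.
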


\begin{proof}
It follows from the definition that $tq(t) = 1 + q(t)^{2}$, thus
\begin{equation*}
a_{N}q(N-2) = (N-2)q(N-2) + \frac{2}{N}q(N-2) = 1 + q(N-2)^{2} + \frac{2}{N}q(N-2).
\end{equation*}
Using $q(t) \leqslant 2/t$ and $1/N\leqslant 1/(N-2)$ we get the desired inequality.
\end{proof}

As explained in the beginning of this subsection, we must also control the gap between $\lambda_{\theta}$ and $\tau_{\theta}$ and this is the content of the next lemma. We already compared their inverses in the proof of Proposition \ref{prop:firstordercutoff} but we now need to compare the values of the polynomials $u_{n}$ at $N-\lambda_{\theta}$ and $N-\tau_{\theta}$.

\begin{lem}\label{lem:comparisonpsiphi}
For any $N\geqslant 6$ and any $\theta\in [0, 2\pi[$,
\begin{equation*}
\frac{u_{n}(N-\tau_{\theta})}{u_{n}(N-\lambda_{\theta})}\leqslant e^{4n/(N-2)^{2}}.
\end{equation*}
\end{lem}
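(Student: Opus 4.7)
The plan is to reduce the inequality to a bound on the ratio $q_2/q_1$, where $q_1 = q(N-\tau_\theta)$ and $q_2 = q(N-\lambda_\theta)$, and then estimate this ratio using the defining relation $q(t)+q(t)^{-1}=t$.

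First I would observe the algebraic identity
\begin{equation*}
u_n(t) = \frac{q(t)^{-n-1}-q(t)^{n+1}}{q(t)^{-1}-q(t)} = q(t)^{-n}\sum_{k=0}^n q(t)^{2k} = q(t)^{-n}\,f_n(t),
\end{equation*}
where $f_n(t) := \sum_{k=0}^n q(t)^{2k}$. Since $q$ is strictly decreasing in $t$ on $(2,\infty)$ and each $q(t)^{2k}$ is increasing in $q$, the function $f_n$ is decreasing in $t$. Because $\tau_\theta \leqslant \lambda_\theta$, we have $N-\tau_\theta \geqslant N-\lambda_\theta$ and therefore $q_1 \leqslant q_2$ and $f_n(N-\tau_\theta)\leqslant f_n(N-\lambda_\theta)$. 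It follows that
\begin{equation*}
\frac{u_n(N-\tau_\theta)}{u_n(N-\lambda_\theta)} = \left(\frac{q_2}{q_1}\right)^{n}\cdot\frac{f_n(N-\tau_\theta)}{f_n(N-\lambda_\theta)} \leqslant \left(\frac{q_2}{q_1}\right)^{n}.
\end{equation*}

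Next I would bound $q_2/q_1$. Subtracting the two defining equations $q_i + q_i^{-1}= N-(\cdot)$ gives
\begin{equation*}
(q_2-q_1)\left(1-\frac{1}{q_1 q_2}\right) = \tau_\theta - \lambda_\theta, \qquad\text{i.e.}\qquad q_2-q_1 = (\lambda_\theta-\tau_\theta)\,\frac{q_1 q_2}{1-q_1 q_2}.
\end{equation*}
From $\log(q_2/q_1)\leqslant (q_2-q_1)/q_1$ we then deduce
\begin{equation*}
\log(q_2/q_1) \leqslant \frac{q_2(\lambda_\theta-\tau_\theta)}{1-q_1 q_2}.
\end{equation*}
The numerator is controlled by the observation from the proof of Proposition \ref{prop:firstordercutoff}: writing $(N-\tau_\theta)^2 = (N-\lambda_\theta)^2 + \lambda_\theta(2-\lambda_\theta)$ and using $\sqrt{a^2+b}-a\leqslant b/(2a)$, one gets
\begin{equation*}
\lambda_\theta-\tau_\theta \leqslant \frac{\lambda_\theta(2-\lambda_\theta)}{2(N-\lambda_\theta)}\leqslant \frac{1}{2(N-2)},
\end{equation*}
since $\lambda_\theta(2-\lambda_\theta)\leqslant 1$ and $\lambda_\theta\leqslant 2$. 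For the remaining factor, $\lambda_\theta,\tau_\theta\leqslant 2$ yields $q_1,q_2\leqslant q(N-2)\leqslant 2/(N-2)$, hence $q_1 q_2\leqslant 4/(N-2)^2$.

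Combining these estimates gives
\begin{equation*}
\log(q_2/q_1) \leqslant \frac{2/(N-2)}{1-4/(N-2)^2}\cdot\frac{1}{2(N-2)} = \frac{1}{(N-2)^2-4}.
\end{equation*}
For $N\geqslant 6$ we have $3(N-2)^2\geqslant 16$, which is easily seen to be equivalent to $(N-2)^2-4\geqslant (N-2)^2/4$, so $\log(q_2/q_1)\leqslant 4/(N-2)^2$. Raising to the $n$-th power yields the claim. The main (minor) obstacle is this last numerical step: being careful enough in each inequality so that all the slack introduced by replacing $\tau_\theta$ by the cleaner quantity $\lambda_\theta$, and by bounding $1-q_1 q_2$ from below, fits inside the stated exponential with constant $4$.
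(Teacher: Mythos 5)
Your proof is correct and takes a genuinely different, and in fact cleaner, route than the paper's. The paper starts from the two-sided estimates of Lemma~\ref{lem:encadrement}, which introduce slack on both sides and force it to control an $n$-independent prefactor
\begin{equation*}
\frac{1}{(N-\lambda_{\theta})\,q(N-\tau_{\theta})\,(1-q(N-\tau_{\theta})^{2})}
\end{equation*}
separately, then it bounds the ratio $q(N-\lambda_{\theta})/q(N-\tau_{\theta})$ by truncating a power-series expansion of $q$ imported from an earlier work. You instead exploit the exact identity $u_{n}(t) = q(t)^{-n}\sum_{k=0}^{n}q(t)^{2k}$ and the monotonicity of the finite geometric sum in $q$ to reduce the whole ratio to $(q_{2}/q_{1})^{n}$ in one step, with no slack and no prefactor to discharge. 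You then estimate $q_{2}/q_{1}$ via the exact algebraic relation $q_{2}-q_{1}=(\lambda_{\theta}-\tau_{\theta})\,q_{1}q_{2}/(1-q_{1}q_{2})$ obtained by subtracting the two defining equations $q_{i}+q_{i}^{-1}=N-(\cdot)$, rather than via a power series. Your bound $\lambda_{\theta}-\tau_{\theta}\leqslant \lambda_{\theta}(2-\lambda_{\theta})/(2(N-\lambda_{\theta}))\leqslant 1/(2(N-2))$, coming from $(N-\tau_{\theta})^{2}=(N-\lambda_{\theta})^{2}+\lambda_{\theta}(2-\lambda_{\theta})$, is also simpler and slightly sharper than the paper's route through $\tau_{\theta}\geqslant (N-1)\lambda_{\theta}/N$. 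All the numerical checks ($\lambda_{\theta}(2-\lambda_{\theta})\leqslant 1$, $q(t)\leqslant 2/t$, and the final reduction to $3(N-2)^{2}\geqslant 16$ for $N\geqslant 6$) go through. In short, your argument is self-contained and elementary, avoiding both Lemma~\ref{lem:encadrement} and the external power-series estimate, at essentially no cost in the constant.
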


\begin{proof}
The estimates of Lemma \ref{lem:encadrement} yield
\begin{align*}
\frac{u_{n}(N-\tau_{\theta})}{u_{n}(N-\lambda_{\theta})} & \leqslant \frac{1}{(N-\lambda_{\theta})(1-q(N-\tau_{\theta})^{2})}\frac{q(N-\lambda_{\theta})^{n-1}}{q(N-\tau_{\theta})^{n}} \\
& = \frac{1}{(N-\lambda_{\theta})q(N-\tau_{\theta})(1-q(N-\tau_{\theta})^{2})}\left(\frac{q(N-\lambda_{\theta})}{q(N-\tau_{\theta})}\right)^{n-1}.
\end{align*}
Using the fact that $q(t) = (1+q(t)^{2})/t$ and the inequality $q(t)\leqslant 2/t$, we get
\begin{align*}
\frac{1}{(N-\lambda_{\theta})q(N-\tau_{\theta})(1-q(N-\tau_{\theta})^{2})} & = \frac{N-\tau_{\theta}}{N-\lambda_{\theta}}\frac{1}{1-q(N-\tau_{\theta})^{4}} \\
& = \left(1 + \frac{\lambda_{\theta} - \tau_{\theta}}{N-\lambda_{\theta}}\right)\left(1 + \frac{q(N-\tau_{\theta})^{4}}{1-q(N-\tau_{\theta})^{4}}\right) \\
& \leqslant \left(1 + \frac{\lambda_{\theta} - \tau_{\theta}}{N-\lambda_{\theta}}\right)\left(1 + \frac{16}{(N-\tau_{\theta})^{4}-16}\right).
\end{align*}
As for the other term, we need a finer estimate on $q(t)$. Recall from the proof of \cite[Lem 3.10]{freslon2017cutoff} that if $(a_{n})_{n\in \N}$ denotes the sequence of coefficients of the power series expansion of the square root function at $1$, then
\begin{equation*}
q(t) = \frac{1}{t} + \sum_{n=2}^{+\infty}a_{n}\left(\frac{2}{t}\right)^{2n-1}.
\end{equation*}
Since $a_{n}\leqslant 1/8$ for all $n\geqslant 2$, $1/t\leqslant q(t) \leqslant 1/t + 1/(t^{3}-4t)$ hence
\begin{align*}
\frac{q(N-\lambda_{\theta})}{q(N-\tau_{\theta})} & \leqslant \frac{N-\tau_{\theta}}{N-\lambda_{\theta}}\left(1 + \frac{1}{(N-\lambda_{\theta})^{2} - 4}\right) \\
& \leqslant \left( 1 + \frac{\lambda_{\theta}-\tau_{\theta}}{N-\lambda_{\theta}}\right)\left(1 + \frac{2}{(N-\lambda_{\theta})^{2}}\right) \\
\end{align*}
where we used the fact that for $N\geqslant 5$, $(N-\lambda_{\theta})^{2}\geqslant 8$ and $1/(x-4)\leqslant 2/x$ as soon as $x\geqslant 8$. It follows that
\begin{align*}
\frac{u_{n}(N-\tau_{\theta})}{u_{n}(N-\lambda_{\theta})} & \leqslant \exp\left(n\frac{\lambda_{\theta} - \tau_{\theta}}{N-\lambda_{\theta}} + (n-1)\frac{2}{(N-\lambda_{\theta})^{2}} + \frac{16}{(N-\tau_{\theta})^{4}-16}\right) \\
& \leqslant \exp\left(n\frac{\lambda_{\theta} - \tau_{\theta}}{N-\lambda_{\theta}} + (n-1)\frac{2}{(N-2)^{2}} + \frac{16}{(N-2)^{4}-16}\right) \\
& = \exp\left(n\frac{\lambda_{\theta} - \tau_{\theta}}{N-\lambda_{\theta}} + n\frac{2}{(N-2)^{2}}\right)\exp\left(-\frac{2}{(N-2)^{2}} + \frac{16}{(N-2)^{4}-16}\right)
\end{align*}
The second exponential will be less than $1$ as soon as $8(N-2)^{2} \leqslant (N-2)^{4} - 16$, which is satisfied if $N\geqslant 2 + 2\sqrt{1 + \sqrt{2}}$ and this number is less than $6$. Moreover, we have seen in the proof of Lemma \ref{lem:estimatepsi} that $\tau_{\theta}\geqslant (N-1)\lambda_{\theta}/N$ so that
\begin{equation*}
\frac{\lambda_{\theta} - \tau_{\theta}}{N-\lambda_{\theta}} \leqslant \frac{\lambda_{\theta}}{N(N-\lambda_{\theta})} \leqslant \frac{2}{N(N-2)}\leqslant \frac{2}{(N-2)^{2}}
\end{equation*}
and the result follows.
\end{proof}

We are now ready to prove the cut-off for the random walk associated to $\varphi_{\mu_{N}}$. Let us set
\begin{equation*}
A_{k}(\mu_{N}) = \sum_{p=1}^{+\infty}\sum_{n_{1}, \cdots, n_{p}\geqslant 1}\sum_{\epsilon_{0}\in \{-1, 1\}}\left\vert\int_{0}^{2\pi} m_{\epsilon}(\delta_{\Arg(g_{\theta})})\prod_{i=1}^{p}\frac{u_{n_{i}}(N-\tau_{\theta})^{2k}}{u_{n_{i}}(N)^{2k-2}}\dd\mu_{N}(\theta)\right\vert^{2k}.
\end{equation*}

\begin{thm}
For $N\geqslant 12$, the random walk associated to the state $\varphi_{\mu_{N}}$ has a cut-off at $N\ln(N)/2$ steps.
\end{thm}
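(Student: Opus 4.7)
The plan is to adapt the upper/lower bound strategy of Propositions \ref{prop:unitarycutoff} and \ref{prop:lowerboundunitary} to the mixture state $\varphi_{\mu_N}$, making direct use of the three comparison lemmas \ref{lem:estimatepsi}, \ref{lem:comparisonanqn} and \ref{lem:comparisonpsiphi}. The underlying heuristic is that $\tau_\theta$ can be effectively replaced by $\lambda_\theta$ (Lemma \ref{lem:comparisonpsiphi}), and that $\int \lambda_\theta\,\dd\mu_N$ is very close to $2$, producing the cut-off parameter $N\ln(N)/2$.

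For the upper bound, I would start from the computation preceding Lemma \ref{lem:estimatepsi}. First, bounding $|\varphi_{\mu_N}(\chi_\alpha)|$ by the triangle inequality and then applying Jensen's inequality to push the $2k$-th power inside the $\mu_N$-integral reduces the Upper Bound Lemma to an integral whose $\theta$-pointwise $(n_1,\dots,n_p)$-summation is of exactly the form treated in Proposition \ref{prop:unitarycutoff}. Before integrating, Lemma \ref{lem:comparisonpsiphi} replaces each factor $u_{n_i}(N-\tau_\theta)^{2k}$ by $u_{n_i}(N-\lambda_\theta)^{2k}$ at the multiplicative cost $\exp(8k n_i/(N-2)^{2})$, and then Lemma \ref{lem:encadrement} provides the cleaner upper bound $q(N-\lambda_\theta)^{-2k n_i}/(1-q(N-\lambda_\theta)^{2})^{2k}$.

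At this stage the $\theta$-integral is exactly of the form handled in Lemma \ref{lem:estimatepsi}, which bounds it by a product of powers of $a_N$. Using Lemma \ref{lem:comparisonanqn} to substitute $a_N \leqslant q(N-2)^{-1}(1+8/(N-2)^{2})$, the resulting expression matches formally the bound from Proposition \ref{prop:unitarycutoff} with $\tau$ replaced by $2$, up to two extra factors $\exp(8k n_i/(N-2)^{2})$ and $(1+8/(N-2)^{2})^{2k}$. Both have exponents of size $O(\ln(N)/N)+O(c/N)$ and can be kept bounded for $N\geqslant 12$ (at the price of possibly raising the threshold on $c$ above which the geometric series converges). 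The subsequent summation over integer partitions and then over $p$ proceeds exactly as in the proof of Proposition \ref{prop:unitarycutoff} with $\tau=2$, and yields a bound of the form $\|\varphi_{\mu_N}^{\ast k}-h\|_{TV}^{2}\leqslant C_{0} e^{-4c}$ for $k=N\ln(N)/2+cN$ and $c$ large enough.

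For the lower bound, I would again use $\chi_{2}$. Because $\varphi_{\mu_N}$ is central, its convolution acts on characters by scalars, and a direct computation gives
\begin{equation*}
\varphi_{\mu_N}^{\ast k}(\chi_{2}) = (N^{2}-1)\left(\int_{0}^{2\pi}\frac{(N-\tau_\theta)^{2}-1}{N^{2}-1}\,\dd\mu_N(\theta)\right)^{k} = (N^{2}-1)\left(1-\frac{4}{N+2}\right)^{k},
\end{equation*}
where the inner integral is evaluated via the Wallis-integral computation $\int \lambda_\theta\,\dd\mu_N = 2(N+1)/(N+2)$ already performed in Lemma \ref{lem:estimatepsi}. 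Plugging in $k = N\ln(N)/2 - cN$ and applying the inequality $\ln(1-x)\geqslant -x-x^{2}/(2(1-x))$ from Lemma \ref{lem:inequalityforlowerbound} gives a lower bound of the form $C_{1} e^{2c}$ for $N\geqslant 12$. The Chebyshev argument of Proposition \ref{prop:lowerboundunitary} with $\|\chi_{2}\|_{\infty}^{2}=9$, $h(\chi_{2})=0$ and $h(\chi_{2}^{2})=1$ then yields $\|\varphi_{\mu_N}^{\ast k}-h\|_{TV} \geqslant 1 - C_{2} e^{-4c}$. The main obstacle is the bookkeeping in the upper bound: the various multiplicative error factors of the form $(1+O(1/N^{2}))^{2k}$ and $\exp(O(k/(N-2)^{2}))$ accumulate, and the lower bound $N\geqslant 12$ must be checked to be large enough to keep them all bounded so that the geometric series in $p$ still converges and produces an overall bound of the form $C(c) e^{-\kappa c}$ with $\kappa > 0$.
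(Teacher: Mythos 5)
Your upper bound follows exactly the paper's route: triangle inequality, Jensen to push the power $2k$ inside the $\mu_N$-integral, then Lemma~\ref{lem:comparisonpsiphi} to trade $\tau_\theta$ for $\lambda_\theta$ at multiplicative cost $b_N^{2k\sum n_i}$, followed by Lemma~\ref{lem:estimatepsi} to evaluate the $\theta$-integral and Lemma~\ref{lem:comparisonanqn} to relate $a_N$ to $q(N-2)^{-1}$. The crucial ordering --- integrate in $\theta$ \emph{first}, then sum over partitions and $p$ --- is respected, which is what makes the argument work despite the fact that pointwise-in-$\theta$ estimates degenerate as $\theta\to 0$; your phrasing (``$\theta$-pointwise summation is of exactly the form treated in Proposition~\ref{prop:unitarycutoff}'') is a bit loose on this point, but the sequence of lemmas you invoke is the right one. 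The paper's explicit bookkeeping bounds $q(N)^{2k-2}(a_N b_N)^{2k}\leqslant e^{4-c}$ for $N\geqslant 12$ using $12N\ln(N)/(N-2)^2\leqslant 4$ and $24N/(N-2)^2\leqslant 3$, and concludes with a geometric series in $p$ converging for $c>4+\ln 2$, consistent with your outline.

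For the lower bound you depart from the paper: you test against $\chi_2$ (as in Proposition~\ref{prop:lowerboundunitary}), whereas the paper uses the self-adjoint element $\chi=\chi_1 z+\bar z\chi_1$, which is admissible here precisely because $\int\cos\theta\,\dd\mu_N$ does not vanish (this was the reason $\chi_2$ had to be used in Proposition~\ref{prop:lowerboundunitary}, where $\nu$ was arbitrary). Both choices run through the same Chebyshev argument with $h(\cdot)=0$, and both yield a bound of the form $1-C e^{-\kappa c}$; your $\chi_2$ variant gives $\kappa=4$ versus the paper's $\kappa=2$, so it is in fact slightly sharper and, being insensitive to the first moments of the angular measure, slightly more robust. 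The only caveat is an arithmetic slip inherited from the paper's Wallis-ratio computation (the correct value is $\int\lambda_\theta\,\dd\mu_N = 2N/(N+1)$, not $2(N+1)/(N+2)$, and accordingly the inner integral in your display equals $\bigl((N-1)/(N+1)\bigr)^{2}$ rather than $1-4/(N+2)$); this changes only lower-order terms and does not affect the cut-off parameter $N\ln(N)/2$ or the shape of the lower bound.

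Overall the proposal is correct: identical strategy for the upper bound, a legitimate and mildly different choice of test element for the lower bound.
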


\begin{proof}
We start with the upper bound and we will prove that for any $N\geqslant 12$ and any $c > 4 + \ln(2)$,
\begin{equation*}
\|\varphi_{\mu_{N}}^{\ast N\ln(N)/2 + cN} - h\|_{TV}\leqslant \frac{e^{2}}{\sqrt{2-4e^{4-c}}}e^{-c/2}.
\end{equation*}
Setting $b_{N} = e^{4/(N-2)^{2}}$, we have by Lemma \ref{lem:comparisonpsiphi}
\begin{equation*}
\int_{0}^{2\pi}\prod_{i=1}^{p}\frac{u_{n_{i}}(N-\tau_{\theta})^{2k}}{u_{n_{i}}(N)^{2k-2}}\leqslant b_{N}^{2k\sum n_{i}}\int_{0}^{2\pi}\prod_{i=1}^{p}\frac{u_{n_{i}}(N-\lambda_{\theta})^{2k}}{u_{n_{i}}(N)^{2k-2}}.
\end{equation*}
Thus, Lemma \ref{lem:estimatepsi} implies through the same computations as in Proposition \ref{prop:unitarycutoff} that
\begin{align*}
A_{k}(\mu_{N}) & \leqslant 2\sum_{p=1}^{+\infty}\sum_{n=p}^{+\infty}\pi_{p}(n)b_{N}^{2kn}B_{p, n}(k) \\
& \leqslant 2\sum_{p=1}^{+\infty}\left(\frac{(a_{N}b_{N})^{2k}}{N^{2k-2}(1-q(N-2)^{2})^{2k}}\right)^{p}\sum_{n=p}^{+\infty}\pi_{p}(n)(q(N)^{2k-2}(a_{N}b_{N})^{2k})^{n-p} \\
& \leqslant 2\sum_{p=0}^{+\infty}\left(\frac{(a_{N}b_{N})^{2k}}{N^{2k-2}(1-q(N-2)^{2})^{2k}}\right)^{p}\left(\frac{1}{1-q(N)^{2k-2}(a_{N}b_{N})^{2k}}\right)^{p}.
\end{align*}
By Lemma \ref{lem:comparisonanqn}, Lemma \ref{lem:comparisonpsiphi} and the estimates of \cite[Lem 3.8 and Lem 3.10]{freslon2017cutoff} (which are valid since $N\geqslant 2 + C(2)$),
\begin{align*}
q(N)^{2k-2}(a_{N}b_{N})^{2k} & = \frac{q(N)^{2k-2}}{q(N-2)^{2k}}(q(N-2)a_{N})^{2k}b_{N}^{2k} \\
& \leqslant e^{-4c}\exp\left(\left(\frac{N\ln(N)}{2} + cN\right)\left(\frac{16}{(N-2)^{2}} + \frac{8}{(N-2)^{2}}\right)\right) \\
& \leqslant e^{-4c}e^{12N\ln(N)/(N-2)^{2}}e^{24cN/(N-2)^{2}} \\
& \leqslant e^{4-c}.
\end{align*}
Here we have used the following two elementary facts : for $N\geqslant 12$,
\begin{itemize}
\item $24N/(N-2)^{2}\leqslant 3$,
\item $12N\ln(N)/(N-2)^{2}\leqslant 4$.
\end{itemize}
Similarly,
\begin{equation*}
\frac{(a_{N}b_{N})^{2k}}{N^{2k-2}(1-q(N-2)^{2})^{2k}}\leqslant e^{4-c}
\end{equation*}
so that as soon as $c > 4$, everything is summable and
\begin{equation*}
A_{k}(\mu_{N})\leqslant 2\sum_{p=1}^{+\infty}\left(\frac{e^{4-c}}{1-e^{4-c}}\right)^{p}.
\end{equation*}
For $c > 4 + \ln(2)$, this sum converges and we conclude as in Proposition \ref{prop:unitarycutoff} that
\begin{equation*}
\|\varphi_{\mu_{N}}^{\ast N\ln(N)/2 + cN} - h\|_{TV}\leqslant \frac{e^{2}}{\sqrt{2-4e^{4-c}}}e^{-c/2}.
\end{equation*}

For the lower bound, we will again consider the self-adjoint element $\chi = \chi_{1}z + \overline{z}\chi_{1}$. We have
\begin{equation*}
\int_{0}^{2\pi}\cos(\theta)\dd\mu_{N}(\theta) = \int_{0}^{2\pi}1-\sin(\theta/2)^{2}\dd\mu_{N}(\theta) = 1 - \frac{W_{N+1}}{W_{N-1}}
\end{equation*}
while by parity the integral of $\sin(\theta)$ with respect to $\mu_{N}$ vanishes. It follows that
\begin{equation*}
\varphi_{\mu_{N}}(\chi) = 2\left(N-1+1-\frac{W_{N+1}}{W_{N-1}}\right) = N-2+\frac{2}{N+2},
\end{equation*}
hence by Lemma \ref{lem:inequalityforlowerbound} (with $a=2$),
\begin{align*}
\varphi_{\mu_{N}}^{\ast k}(\chi) & = 2N\left(1 - \frac{2}{N} + \frac{2}{N(N-2)}\right)^{k} \\
& \geqslant 2N\left(1 - \frac{2}{N}\right)^{N\ln(N)/2}\left(1 - \frac{2}{N}\right)^{-cN} \\
& \geqslant \frac{e^{-1/e}}{\sqrt{2}}e^{2c}
\end{align*}
Moreover, $\var_{\varphi_{\mu_{N}^{\ast k}}}(\chi)\leqslant \|\chi\|_{\infty}^{2}\leqslant 4$ and $h(\chi) = 0$ while $h(\chi^{2}) = 2$ so that the general method of \cite[Prop 3.15]{freslon2017cutoff} yields
\begin{equation*}
\|\varphi_{\mu_{N}^{\ast k}} - h\|_{TV} \geqslant 1 - 8e^{2/e}e^{-2c}(4 + 2) \geqslant 1 - 101e^{-2c}. 
\end{equation*}
\end{proof}

\section{Free wreath products}\label{sec:reflection}

The quantum reflection groups $H_{N}^{s+}$ were introduced by T. Banica in R. Vergnioux in \cite{banica2009fusion} as quantum versions of the complex reflection groups $H_{N}^{s}$ of matrices with one $s$-th root of unity in each row and column and all other coefficients equal to $0$. It is well-known that these can also be seen as wreath products via the isomorphism $H_{N}^{s}\simeq \Z_{s}\wr S_{N}^{+}$. In the quantum case, this wreath product decomposition has an analogue involving the notion of \emph{free wreath product} introduced by J. Bichon in \cite{bichon2004free} : to any compact quantum group $\G$ and integer $N$ it associates a new compact quantum group $\G\wr_{\ast}S_{N}^{+}$. A particular case is when $\G = \widehat{\Gamma}$ is the dual of a discrete group $\Gamma$, and when $\Gamma = \Z_{s}$ we recover the quantum reflection groups $H_{N}^{s+}$. Our aim is to prove a cut-off result for central random walks on quantum reflection groups and more generally on free wreath products of duals of finite groups by $S_{N}^{+}$.

For this purpose, we need a description of the representation theory of $\widehat{\Gamma}\wr_{\ast}S_{N}^{+}$ as well as a method to construct explicit central states. Both can be obtained thanks to the notion of \emph{monoidal equivalence} from \cite{bichon2006ergodic}, which is a useful technical tool from quantum group theory. Instead of introducing it, we will simply give the results needed in the sequel :
\begin{itemize}
\item It follows from \cite[Prop 6.3]{freslon2012examples} that if two compact quantum groups are monoidally equivalent, then there is a bijection between their sets of equivalence classes of irreducible representations which induces a bijection between their central states,
\item By \cite[Thm 5.11]{lemeux2014free}, $\widehat{\Gamma}\wr_{\ast} S_{N}^{+}$ is monoidally equivalent to the quantum subgroup of $SU_{q(\sqrt{N})}(2)\ast \widehat{\Gamma}$ generated by the coefficients of the irreducible representations of the form $u^{1}\gamma u^{1}$ for all $\gamma\in \Gamma$. Here, $SU_{q}(2)$ is S.L. Woronowicz' quantum $SU(2)$ group introduced in \cite{woronowicz1987twisted}.
\end{itemize}
To complete the description, we have to explain what is the representation theory of $SU_{q}(2)$, but this is the same as $O_{N}^{+}$ except that the dimension of the $n$-th irreducible representation is $u_{n}(q+q^{-1}) = (q^{-n-1} - q^{n+1})/(q^{-1} - q)$. On can then show (see for instance the beginning of Section $3$ of \cite{freslon2017torsion}) that the characters are all words of the form
\begin{equation*}
\chi_{2n_{0}+1}\gamma_{1}\chi_{2n_{1}+2}\gamma_{2}\cdots \chi_{2n_{p-1}+2}\gamma_{p}\chi_{2n_{p}+1}.
\end{equation*}

Now, given a state $\varphi_{t}$ on $\O(SU_{q}(2))$ and any state $\psi$ on $\O(\widehat{\Gamma}) = \C[\Gamma]$, we can form their free product, restrict it to the subgroup indicated above and then transfer it through monoidal equivalence. Note that because $SU_{q}(2)$ is not of Kac type, it is not clear that $\varphi_{t} : \chi_{n}\mapsto u_{n}(t)$ extends to a central state (there is no Haar state preserving conditional expectation onto the central algebra). This is nevertheless true by \cite[Prop 9]{freslon2013ccap}, so let $\psi$ be a state on $\O(\widehat{\Gamma})$ and let $\varphi_{t, \psi}$ be the corresponding state on $\O(\widehat{\Gamma}\wr_{\ast}S_{N}^{+})$. The right-hand side of the inequality in Lemma \ref{lem:upperbound} can be written as
\begin{equation*}
A_{k}(t) = \sum_{p=0}^{+\infty}\sum_{n_{0}, \cdots, n_{p}}\sum_{\gamma_{1}, \cdots, \gamma_{p}}\vert\psi(\gamma_{1}\cdots\gamma_{p})\vert\frac{u_{2n_{0}+1}(t)^{2k}u_{2n_{p}+1}(t)^{2k}}{u_{2n_{0}+1}(\sqrt{N})^{2k-2}u_{2n_{p}+1}(\sqrt{N})^{2k-2}}\prod_{i=1}^{p-1}\frac{u_{2n_{i}+2}(t)^{2k}}{u_{2n_{i}+2}(\sqrt{N})^{2k-2}}.
\end{equation*}
The expression above is slightly ambiguous as far as the term for $p=0$ is concerned. We write it this way for concision, but when computing we will distinguish in the end the case $p = 0$ from the other cases. As for free unitary quantum groups in Section \ref{sec:unitary}, we will see that the cut-off parameter does not depend on the state $\psi$. Note that because $\psi$ has norm one and any element of $\Gamma$ is a unitary, we have the inequality
\begin{equation}\label{eq:grouppart}
\sum_{\gamma_{1}, \cdots, \gamma_{p}\in \Gamma}\vert\psi(\gamma_{1}\cdots\gamma_{p})\vert\leqslant \vert\Gamma\vert^{p}.
\end{equation}

\begin{comment}
To see this let us set
\begin{equation*}
K(\psi) = \sum_{\gamma\in\Gamma}\vert\psi(\gamma)\vert.
\end{equation*}
We will use the following elementary fact :

\begin{lem}\label{lem:grouppart}
For an integer $p\geqslant 1$,
\begin{equation*}
\sum_{\gamma_{1}, \cdots, \gamma_{p}\in \Gamma}\vert\psi(\gamma_{1}\cdots\gamma_{p})\vert = \vert \Gamma\vert^{p-1}K(\psi) \leqslant \vert\Gamma\vert^{p}.
\end{equation*}
\end{lem}

\begin{proof}
Let us set, for $\gamma\in \Gamma$,
\begin{equation*}
S_{p}(\gamma) = \left\{(\gamma_{1}, \cdots, \gamma_{p})\in \Gamma^{p} \mid \gamma_{1}\cdots\gamma_{p} = \gamma\}\right.
\end{equation*}
For $\gamma\neq\gamma'\in \Gamma$, the map $\phi_{\gamma, \gamma'} : S_{p}(\gamma)\to S_{p}(\gamma')$ sending $(\gamma_{1}, \cdots, \gamma_{p})$ to $(\gamma'\gamma^{-1}\gamma_{1}, \cdots, \gamma_{p})$ is a bijection with inverse $\phi_{\gamma', \gamma}$. Thus, the cardinality of $S_{p}(\gamma)$ does not depend on $\gamma$. Since moreover the disjoint union of these $\vert \Gamma\vert$ sets is $\Gamma^{p}$, they have cardinality $\vert\Gamma\vert^{p}/\vert\Gamma\vert = \vert\Gamma\vert^{p-1}$. Thus,
\begin{equation*}
\sum_{\gamma_{1}, \cdots, \gamma_{p}\in\Gamma}\vert\psi(\gamma_{1}\cdots\gamma_{p})\vert = \vert \Gamma\vert^{p-1}\sum_{\gamma\in\Gamma}\vert\psi(\gamma)\vert = \vert \Gamma\vert^{p-1}K(\psi).
\end{equation*}
The inequality in the statement follows from the fact that $\psi$, being a state, has norm one and $\gamma$ is unitary for all $\gamma\in \Gamma$.
\end{proof}
\end{comment}

As usual, we will consider a parameter $t$ of the form $N-\tau$. This means that we need estimates for expressions involving powers of the function $q$ evaluated at $\sqrt{N}$ and $\sqrt{N-\tau}$. For $\tau = 2$, such estimates where obtained in the proof of \cite[Thm 4.4]{freslon2017cutoff}. We will now expand the argument to other values of $\tau$. However, and quite surprisingly, we will see that we need to assume that $\tau$ is larger that $7/4$ to get the required estimate. We are therefore not able to prove as general a result as \cite[Lem 3.10]{freslon2017cutoff}.

\begin{prop}\label{prop:estimatesquantumreflectiongroup}
There exist a degree four polynomial $Q\in \R[X]$ such that for any $\tau > 7/4$ and $N\geqslant Q(\tau)/(4\tau - 7)$,
\begin{equation*}
\frac{q(\sqrt{N})}{\sqrt{N}q(\sqrt{N-\tau})^{2}(1-q(\sqrt{N-\tau})^{2})}\leqslant e^{-\tau/N}.
\end{equation*}
\end{prop}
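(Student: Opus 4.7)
The plan is to first rewrite the left-hand side in a form involving only the two elementary quantities $a = \sqrt{1-4/N}$ and $b = \sqrt{1-4/(N-\tau)}$, and then to compare both sides via Taylor-style expansions with explicit remainders.

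Using $q(t) = (t-\sqrt{t^{2}-4})/2$ one has $q(\sqrt{N})/\sqrt{N} = (1-a)/2$ and, by a short computation exploiting $q(t)^{2} = tq(t)-1$, $q(\sqrt{N-\tau})^{2} = (1-b)/(1+b)$ and $1 - q(\sqrt{N-\tau})^{2} = 2b/(1+b)$, so that
\[
q(\sqrt{N-\tau})^{2}(1 - q(\sqrt{N-\tau})^{2}) = \frac{2b(1-b)}{(1+b)^{2}}.
\]
Using also the identities $(1-a)(1+a) = 4/N$ and $(1-b)(1+b) = 4/(N-\tau)$ to clear the factors $1-a$ and $1-b$, the quantity to bound rewrites as $(N-\tau)(1+b)^{3}/(4Nb(1+a))$, and the desired inequality becomes $(N-\tau)(1+b)^{3} \leqslant 4Nb(1+a)e^{-\tau/N}$.

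Next I would expand $a$ and $b$ via $\sqrt{1-x} = 1 - x/2 - x^{2}/8 - \sum_{n\geqslant 3}|a_{n}|x^{n}$; the coefficients $|a_{n}|$ are decreasing and bounded by $1/8$ for $n \geqslant 2$, so the tail admits an explicit geometric bound and one obtains $a = 1 - 2/N - 2/N^{2} + O(1/N^{3})$ with a controlled constant, and similarly for $b$ in the variable $1/(N-\tau)$. Taking logarithms of both sides and expanding $\ln(1-\tau/N)$, $\ln(1+a)$, $\ln b$ and $\ln(1+b)$ via the alternating series for $\ln(1+x)$, the inequality recasts as a polynomial-plus-remainder condition in $N$ and $\tau$. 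After clearing a common denominator such as $N^{2}(N-\tau)^{2}$, the leading coefficient of $N$ turns out to be $4\tau - 7$, while the remaining $\tau$-dependent parts assemble into a polynomial $Q$ of degree $4$. Requiring $N$ large enough that $(4\tau - 7)N$ dominates $Q(\tau)$ and absorbs the $1/N$-remainder tails then yields the announced threshold $N \geqslant Q(\tau)/(4\tau - 7)$.

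The main obstacle is that asymptotically $\ln(\mathrm{LHS}) + \tau/N$ has leading behaviour $-(\tau^{2}+2\tau-2)/(2N^{2})$, which is already strictly negative as soon as $\tau > \sqrt{3}-1 \approx 0.73$, well below the stated bound $\tau > 7/4$. The threshold $7/4$ is therefore not an asymptotic obstruction but rather the price paid for the explicit remainder estimates: in order to keep the $1/N^{3}$ tails under geometric control one loses some sharpness at the $1/N^{2}$ level, and this loss is precisely what the factor $4\tau - 7$ records. The delicate step of the proof is thus not any single inequality but rather the careful bookkeeping of constants throughout the expansions, in such a way that the sub-leading terms assemble into a manageable degree-$4$ polynomial $Q$.
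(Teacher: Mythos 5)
Your algebraic reduction is valid and actually rather clean: writing $a=\sqrt{1-4/N}$, $b=\sqrt{1-4/(N-\tau)}$, the identities $q(\sqrt{N})/\sqrt{N}=(1-a)/2$, $q(\sqrt{N-\tau})^2=(1-b)/(1+b)$ and $1-q(\sqrt{N-\tau})^2=2b/(1+b)$ are all correct, and so is the resulting reformulation $(N-\tau)(1+b)^3\leqslant 4Nb(1+a)e^{-\tau/N}$. Your asymptotic observation is also correct: expanding $\ln$ of the left-hand side plus $\tau/N$ in powers of $1/N$ does give leading term $-(\tau^2+2\tau-2)/(2N^2)$, so asymptotically the inequality only needs $\tau>\sqrt{3}-1$, and the threshold $7/4$ in the statement is an artifact of the explicit estimates. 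This is genuinely in the same family as the paper's argument (Taylor-type expansion plus explicit remainder control), though strategically different: the paper never takes logarithms, works in the variable $t=N-\tau$, and lower-bounds the reciprocal of the left-hand side directly using the one-sided bound $\sqrt{1+x}\geqslant 1+x/2-x^2/8$ on $f(t)=\sqrt{(t+\tau)(t+\tau-4)}$, after first replacing $e^{\tau/N}$ by $1+\tau/t$ and discarding certain favourable terms.

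The genuine gap is that the heart of the proof — the bookkeeping itself — is neither carried out nor made plausible in detail, and the one concrete claim you do make about it is inconsistent with your own expansion. You assert that after clearing $N^2(N-\tau)^2$ "the leading coefficient of $N$ turns out to be $4\tau-7$", but what actually emerges at leading order from your log-expansion is the coefficient $-(\tau^2+2\tau-2)$ at the $N^2$ level, not $-(4\tau-7)$. The quantity $4\tau-7$ does not appear by "clearing a common denominator"; in the paper it arises only after a very particular cascade of lossy simplifications (passing to $1+\tau/t$, then truncating the development of $\sqrt{t+\tau}\,q(\sqrt{N})^{-1}\bigl(1/t+1/t^2+1/(2t^3)\bigr)$ at order $1/t^2$, then expanding $f(t)$ to a specific order), and it is exactly the constant term $1/2+\tau+(\tau-4)=2\tau-7/2$ that survives those discards. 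Your sketch never identifies which terms to discard, nor verifies that the result is a degree-four polynomial in $\tau$, nor shows that the remainders one throws away are negative (or at least controllable) so that the implied inequality is indeed stronger. Saying "the delicate step is the careful bookkeeping of constants" is accurate but is precisely a description of the part of the proof that is missing, so as it stands this is an outline with a correct opening move and correct asymptotic heuristics rather than a proof.
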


\begin{proof}
Proceeding as in the proof of \cite[Thm 4.4]{freslon2017cutoff} and using the fact that $e^{\tau/(t+\tau)}\leqslant 1 + \tau/t$, we know that for $t = N - \tau \geqslant 12$ the inequality in the statement is implied by the inequality
\begin{equation*}
\sqrt{t+\tau}\frac{\sqrt{t+\tau} + \sqrt{t+\tau - 4}}{2}\left(\frac{1}{t} + \frac{1}{t^{2}} + \frac{1}{2t^{3}}\right) \geqslant 1 + \frac{\tau}{t}.
\end{equation*}
Developing the left-hand side and letting aside the terms with $t^{3}$ at the denominator yields the stronger inequality
\begin{equation*}
1 + \frac{\tau}{t} \leqslant \frac{1}{2} + \frac{1}{2t} + \frac{1}{4t^{2}} + \frac{\tau}{2t} + \frac{\tau}{2t^{2}} + \frac{\sqrt{(t+\tau)(t+\tau-4)}}{2t} + \frac{\sqrt{(t+\tau)(t+\tau-4)}}{2t^{2}}.
\end{equation*}
which, after multiplying by $2t^{2}$, yields
\begin{equation}\label{eq:inequality}
t^{2} + (\tau-1)t \leqslant \frac{1}{2} + \tau + (t+1)\sqrt{(t+\tau)(t+\tau-4)}.
\end{equation}
Let us set $f(t) = \sqrt{(t+\tau)(t+\tau-4)}$. Using the lower bound $\sqrt{1+x}\geqslant 1+x/2-x^{2}/8$ yields
\begin{align*}
f(t) & = t\sqrt{1 + \frac{2\tau - 4}{t} + \frac{\tau^{2}-4\tau}{t^{2}}} \\
& \geqslant t\left(1 + \frac{\tau-2}{t} + \frac{\tau^{2} - 4\tau}{2t^{2}} - \frac{\tau^{2}-4\tau+4}{2t^{2}}-\frac{(\tau-2)(\tau^{2}-4\tau)}{2t^{3}} - \frac{(\tau^{2}-4\tau)^{2}}{8t^{4}}\right) \\ 
& = t + (\tau-2) - \frac{2}{t} - \frac{(\tau - 2)(t^{2} - 4\tau)}{2t^{2}} - \frac{(\tau^{2} - 4\tau)^{2}}{8t^{3}}
\end{align*}
so that
\begin{equation*}
(t+1)f(t) \geqslant t^{2} + (\tau-1)t + \tau - 4 - \frac{4 + \tau^{3} - 6\tau^{2} + 8\tau}{2t} - \frac{(\tau^{2}-4\tau)(\tau^{2}-8)}{8t^{2}} - \frac{(\tau^{2} - 4\tau)^{2}}{8t^{3}}.
\end{equation*}
Since $t\geqslant 14$, it is enough to have
\begin{equation}\label{eq:Qcondition}
\frac{4\times 14^{2}(4 + \tau^{3} - 6\tau^{2} + 8\tau) + 14(\tau^{2}-4\tau)(\tau^{2}-8) + (\tau^{2} - 4\tau)^{2}}{8\times 14^{2}\times t}\leqslant 2\tau - \frac{7}{2}
\end{equation}
and the proof is complete, noticing that the above condition and $N\geqslant \tau + 12$ can be expressed with one polynomial $Q(\tau)$.
\end{proof}

It is easy to extract from the above proof an explicit polynomial $Q$ which works. We did not do it in order to lighten the proof, but let us give it as a separate statement :

\begin{lem}\label{lem:estimateQ}
In the statement of Proposition \ref{prop:estimatesquantumreflectiongroup}, one can take the polynomial
\begin{equation*}
Q(\tau) = \frac{\tau^{4}}{28} + 2\tau^{3} - 8\tau^{2} + 59\tau - 76.
\end{equation*}
\end{lem}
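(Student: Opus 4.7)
This lemma amounts to making the arithmetic hidden in the proof of Proposition \ref{prop:estimatesquantumreflectiongroup} explicit: the plan is to expand the numerator of \eqref{eq:Qcondition}, rewrite the resulting condition in the form $N\geq R(\tau)/(4\tau-7)$, and then verify that the proposed $Q$ dominates $R$ on $(7/4,+\infty)$.

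A direct computation gives
\begin{equation*}
4\cdot 14^{2}(4+\tau^{3}-6\tau^{2}+8\tau)+14(\tau^{2}-4\tau)(\tau^{2}-8)+(\tau^{2}-4\tau)^{2}=15\tau^{4}+720\tau^{3}-4800\tau^{2}+6720\tau+3136,
\end{equation*}
and since the denominator $8\cdot 14^{2}\cdot(2\tau-7/2)$ equals $784(4\tau-7)$, the condition \eqref{eq:Qcondition} amounts to $t\geq(15\tau^{4}+720\tau^{3}-4800\tau^{2}+6720\tau+3136)/(784(4\tau-7))$. Substituting $N=t+\tau$ and gathering over a common denominator, this becomes
\begin{equation*}
N\geq\frac{15\tau^{4}+720\tau^{3}-1664\tau^{2}+1232\tau+3136}{784(4\tau-7)}.
\end{equation*}
The other constraint used in the proof of Proposition \ref{prop:estimatesquantumreflectiongroup}, namely $t\geq 14$, takes the analogous form $N\geq(4\tau^{2}+49\tau-98)/(4\tau-7)$.

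It then suffices to check that $784\,Q(\tau)$ dominates both numerators on $(7/4,+\infty)$. The decisive inequality to verify is
\begin{equation*}
784\,Q(\tau)-(15\tau^{4}+720\tau^{3}-1664\tau^{2}+1232\tau+3136)=13\tau^{4}+848\tau^{3}-4608\tau^{2}+45024\tau-62720\geq 0.
\end{equation*}
A direct evaluation at $\tau=7/4$ gives a positive value, and a short analysis of the first and second derivatives shows that this polynomial is increasing on $[7/4,+\infty)$, so the inequality holds throughout. The comparison with the (much smaller) quadratic numerator is then immediate. The only obstacle is the careful bookkeeping of these algebraic manipulations; no new idea beyond that of Proposition \ref{prop:estimatesquantumreflectiongroup} is required.
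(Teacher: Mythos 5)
Your arithmetic for the main expansion is correct: the numerator of the left-hand side of \eqref{eq:Qcondition} is indeed $15\tau^{4}+720\tau^{3}-4800\tau^{2}+6720\tau+3136$, the substitution $N=t+\tau$ turns the condition into $784(4\tau-7)N\geq 15\tau^{4}+720\tau^{3}-1664\tau^{2}+1232\tau+3136$, and the difference $784\,Q(\tau)-(15\tau^{4}+720\tau^{3}-1664\tau^{2}+1232\tau+3136)=13\tau^{4}+848\tau^{3}-4608\tau^{2}+45024\tau-62720$ is positive on $[7/4,+\infty)$ by exactly the derivative argument you sketch (one checks $f''(7/4)>0$, $f'''>0$, $f'(7/4)>0$, $f(7/4)>0$). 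This is the same ``expand and compare polynomials'' strategy as the paper, though you keep the two constraints separate while the paper folds them into a single auxiliary polynomial $R$.

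However, the last sentence, ``the comparison with the (much smaller) quadratic numerator is then immediate,'' is the one genuine gap, and it is where the constant $76$ in $Q$ is actually tight. First, be careful about normalizations: the constraint $t\geq 14$ reads $N\geq(\tau+14)$, i.e.\ after clearing the denominator $(4\tau-7)$ it gives $Q(\tau)\geq(\tau+14)(4\tau-7)=4\tau^{2}+49\tau-98$; this must be compared with $Q(\tau)$ itself, not with $784\,Q(\tau)$. Second, this comparison is not immediate: setting $g(\tau)=Q(\tau)-(4\tau^{2}+49\tau-98)=\tfrac{\tau^{4}}{28}+2\tau^{3}-12\tau^{2}+10\tau+22$, one finds $g'(7/4)<0$, so the ``positive at the endpoint and increasing'' argument that worked for the quartic fails here. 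In fact $g$ decreases on $(7/4,\tau_{*})$ and increases afterwards, where $\tau_{*}\approx 3.23$ is the unique root of $\tau^{3}+42\tau^{2}-168\tau+70=0$ in $(7/4,\infty)$, and the minimum value $g(\tau_{*})$ is only about $0.4$: for instance $Q(3.2)\approx 100.16$ while $4(3.2)^{2}+49\cdot 3.2-98\approx 99.76$. So the quadratic is emphatically not ``much smaller''; one must locate the minimum of $g$ on $[7/4,\infty)$ and verify it is positive, a small but necessary extra computation that your write-up omits.

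For what it is worth, the paper's own proof packages the two constraints into a single polynomial $R(\tau)=P(\tau)+(4\tau-7)(\tau+12)$ and checks $R\leq Q$ via the clean factorization $28(Q-R)=4\tau(\tau^{2}+2\tau+6)$, but the coefficients given for $R$ do not appear to be reproducible from the stated $P$, and the paper's statement uses $\tau+12$ where the derivation of \eqref{eq:Qcondition} in fact requires $t\geq 14$. Your bookkeeping is cleaner on this point; it just needs the missing comparison with the quadratic spelled out.
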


\begin{proof}
Let us denote by $P(\tau)$ the numerator of the left-hand side of Equation \eqref{eq:Qcondition}. To take into account the extra condition $N\geqslant \tau + 12$, we have to consider the polynomial $R(\tau) = P(\tau) + (4\tau - 7)(\tau  + 12)$. Developping and simplifying then yields
\begin{equation*}
R(\tau) = \frac{\tau^{4} + 52\tau^{3} - 232\tau^{2} + 1628\tau - 2128}{28}
\end{equation*}
which is less than the polynomial in the statement.
\end{proof}

Note that using this and micking the proof of \cite[Thm 4.4]{freslon2017cutoff}, one obtains a proof of the cut-off for the uniform quantum random walk on $m$-cycles for arbitrary $m$ mentioned in the end of \cite[Sec 4.1]{freslon2017cutoff}. We are now ready for the proof of the cut-off phenomenon for free wreath products.

\begin{thm}\label{thm:cutoffwreath}
Let $\tau > 7/4$ and $N\geqslant Q(\tau)/(4\tau - 7)$. Then, the random walk associated to $\varphi_{N-\tau, \psi}$ has a cut-off at $N\ln(N)/\tau$ steps.
\end{thm}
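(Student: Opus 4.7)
The plan is to combine an upper bound via the Upper Bound Lemma~\ref{lem:upperbound} with a lower bound via Chebyshev's inequality, following the pattern used for free unitary quantum groups in Propositions~\ref{prop:unitarycutoff} and~\ref{prop:lowerboundunitary}.

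For the upper bound, fix $k = N\ln(N)/\tau + cN$ with $c > 0$ to be chosen sufficiently large, and apply Lemma~\ref{lem:upperbound} to reduce the problem to bounding $A_{k}(N-\tau)/4$. First, use inequality~\eqref{eq:grouppart} to eliminate the sum over $\gamma_{1},\ldots,\gamma_{p}$ at the cost of a multiplicative factor $\vert\Gamma\vert^{p}$. Then, for each fixed $p$, apply Lemma~\ref{lem:encadrement} to each inner factor $u_{2n_{i}+2}(\sqrt{N-\tau})^{2k}/u_{2n_{i}+2}(\sqrt{N})^{2k-2}$, as well as to the two odd boundary factors $u_{2n_{0}+1}, u_{2n_{p}+1}$, isolating a factor of the form $r^{\sum n_{i}}$ with $r = q(\sqrt{N})^{2k-2}/q(\sqrt{N-\tau})^{2k}$, times an overall normalization in $\sqrt{N}$ and $(1-q(\sqrt{N-\tau})^{2})$. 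Since the inner sum over $(n_{0},\ldots,n_{p})$ depends on the indices only through $n = \sum n_{i}$, the partition-function trick from the proof of Proposition~\ref{prop:unitarycutoff} reduces it to a geometric sum in $r$.

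The key estimate then comes from Proposition~\ref{prop:estimatesquantumreflectiongroup}: raising its inequality to the power $2k-2$ yields $r \leqslant \text{const}\times e^{-2c\tau}/N^{2}$, and a parallel bound on the per-factor normalization gives a contribution of order $(\vert\Gamma\vert e^{-2c\tau})^{p}$ for each $p$. The outer sum over $p$ is therefore geometric and converges as soon as $c$ exceeds an explicit constant depending on $\vert\Gamma\vert$ and $\tau$, yielding a bound of the form $C(\tau,\vert\Gamma\vert)\,e^{-c\tau}$.

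For the lower bound with $k = N\ln(N)/\tau - cN$, pick a simple self-adjoint central element $\chi$, for instance the irreducible character corresponding to $p=0$ and $n_{0}=1$, viewed as an element of $\O(\widehat{\Gamma}\wr_{\ast}S_{N}^{+})$ via monoidal equivalence. Since $\varphi_{N-\tau}$ is a $*$-homomorphism on the central algebra of $SU_{q(\sqrt{N})}(2)$, Lemma~\ref{lem:moments} allows an explicit computation: $\varphi_{N-\tau,\psi}^{\ast k}(\chi) = u_{3}(\sqrt{N-\tau})^{k}/u_{3}(\sqrt{N})^{k-1}$. A direct estimate of this ratio, modelled on the one carried out in Proposition~\ref{prop:lowerboundunitary}, shows that for this choice of $k$ the expectation is of order $e^{c\tau}$, while the variance remains bounded by $\|\chi\|_{\infty}^{2}$, a constant independent of $N$. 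The Chebyshev argument of \cite[Prop 3.15]{freslon2017cutoff} then forces the total variation distance to be close to $1$.

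The main obstacle will be the bookkeeping in the upper bound: the two odd boundary characters and the inner even ones have slightly different exponent structures and must be tracked separately, the $p=0$ case (no group element) must be handled apart, and the factor $\vert\Gamma\vert^{p}$ must be absorbed into the geometric decay so that the cut-off threshold $N\ln(N)/\tau$ itself is independent of $\vert\Gamma\vert$. The restriction $\tau > 7/4$ is forced on us because this is exactly the range in which Proposition~\ref{prop:estimatesquantumreflectiongroup} provides the exponential rate $e^{-\tau/N}$ that makes every geometric series collapse as required.
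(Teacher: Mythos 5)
Your overall strategy tracks the paper's proof closely: the upper bound via the Upper Bound Lemma, the factor $\vert\Gamma\vert^{p}$ from Equation~\eqref{eq:grouppart}, the partition-function trick, Proposition~\ref{prop:estimatesquantumreflectiongroup} supplying the exponential rate, the separate treatment of $p=0$, and a lower bound via Chebyshev on a single central element using the method of \cite[Prop 3.15]{freslon2017cutoff}. The observation that the threshold is independent of $\vert\Gamma\vert$ and that $\tau>7/4$ is forced by Proposition~\ref{prop:estimatesquantumreflectiongroup} are both exactly the points the paper makes.

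The one substantive gap is your choice of test element for the lower bound. You take the ``irreducible character corresponding to $p=0$ and $n_{0}=1$'' and read off $\chi_{3}$, computing $\varphi_{N-\tau,\psi}^{\ast k}(\chi_{3}) = u_{3}(\sqrt{N-\tau})^{k}/u_{3}(\sqrt{N})^{k-1}$. But $\chi_{3}$ is not a character of $\widehat{\Gamma}\wr_{\ast}S_{N}^{+}$. Under the monoidal equivalence with the subcategory of $\mathrm{Rep}(SU_{q(\sqrt N)}(2)\ast\widehat\Gamma)$ generated by the representations $u^{1}\gamma u^{1}$, any tensor product of generators contains an even number of copies of $u^{1}$, so once the $\gamma$-parts cancel the resulting pure-$SU_{q}(2)$ components are all even, i.e.\ $u^{0}, u^{2}, u^{4}, \ldots$; odd representations such as $u^{3}$ do not occur with trivial $\Gamma$-part. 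Correspondingly $u_{3}(\sqrt N)=\sqrt N(N-2)$ is not an integer, which is impossible for a dimension in the Kac-type wreath product. The squared boundary factors $u_{2n_{0}+1}(t)^{2k}u_{2n_{p}+1}(t)^{2k}$ in $A_{k}(t)$ for $p=0$ are precisely the sign of this: the $p=0,\ n_{0}$ term corresponds to the \emph{reducible} character $\chi_{2n_{0}+1}^{2}$, of integer dimension $u_{2n_{0}+1}(\sqrt N)^{2}$, not to $\chi_{2n_{0}+1}$ itself (for instance $\chi_{1}^{2}=1+\chi_{2}$ has dimension $N$). The paper instead uses $\chi_{2}$, the character of the standard $(N-1)$-dimensional $S_{N}^{+}$-representation sitting inside the wreath product, for which $\varphi_{N-\tau,\psi}^{\ast k}(\chi_{2}) = (N-1)\bigl(\tfrac{N-\tau-1}{N-1}\bigr)^{k}$ is an honest character value and is bounded below cleanly via the estimate of Lemma~\ref{lem:inequalityforlowerbound}, with $\|\chi_{2}\|_{\infty}^{2}\leqslant 9$ controlling the variance. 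Replacing your $\chi_{3}$ by $\chi_{2}$ repairs the argument, and with that change your proposal coincides with the paper's proof.
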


\begin{proof}
As usual, we start with the upper bound and we will prove that for $k = N\ln(N)/\tau$ and any $c > \ln(1+\sqrt{\vert\Gamma\vert})/2\tau$,
\begin{equation*}
\left\|\varphi_{N-\tau, \psi}^{\ast N\ln(N)/\tau + cN} - h\right\|_{TV}\leqslant \frac{e^{-\tau c}}{\sqrt{1 - 1e^{-2\tau c}}}\sqrt{1+\sqrt{\vert\Gamma\vert}}.
\end{equation*}
This requires bounding $A_{k}(\sqrt{N-\tau})$ and we will first consider the sum over $(n_{0}, \cdots, n_{p})$. By Lemma \ref{lem:encadrement},
\begin{align*}
& \sum_{n_{0}, \cdots, n_{p}\geqslant 1} \frac{u_{2n_{0}+1}(\sqrt{N-\tau})^{2k}}{u_{2n_{0}+1}(\sqrt{N})^{2k-2}}\frac{u_{2n_{p}+1}(\sqrt{N-\tau})^{2k}}{u_{2n_{p}+1}(\sqrt{N})^{2k-2}}\prod_{i=1}^{p-1}\frac{u_{2n_{i}+2}(\sqrt{N-\tau})^{2k}}{u_{2n_{i}+2}(\sqrt{N})^{2k-2}} \\
\leqslant & \sum_{n_{0}, \cdots, n_{p}\geqslant 1} \frac{q(\sqrt{N})^{(2k-2)(p-1+2\sum n_{i})}}{q(\sqrt{N-\tau})^{2k(2p + 2\sum n_{i})}}\left(\frac{1}{\sqrt{N}^{2k-2}(1-q(\sqrt{N-\tau})^{2})^{2k}}\right)^{p+1} \\
= & \frac{q(\sqrt{N})^{(2k-2)(p-1)}}{q(\sqrt{N-\tau})^{4kp}}\left(\frac{1}{\sqrt{N}^{2k-2}(1-q(\sqrt{N-\tau})^{2})^{2k}}\right)^{p+1}\sum_{n=p+1}^{+\infty}\pi_{p+1}(n)\left(\frac{q(\sqrt{N})^{4k-4}}{q(\sqrt{N-\tau})^{4k}}\right)^{n} \\
\leqslant & \frac{q(\sqrt{N})^{(2k-2)(p-1)}}{q(\sqrt{N-\tau})^{4kp}}\left(\frac{1}{\sqrt{N}^{2k-2}(1-q(\sqrt{N-\tau})^{2})^{2k}}\right)^{p+1}\left(\frac{q(\sqrt{N})^{4k-4}}{q(\sqrt{N-\tau})^{4k}}\right)^{p+1}\\
& \times\left(\frac{1}{1 - \frac{q(\sqrt{N})^{4k-4}}{q(\sqrt{N-\tau})^{4k}}}\right)^{p+1} \\
= & \left(\frac{q(\sqrt{N})^{4k-4}}{q(\sqrt{N-\tau})^{4k}}\right)^{p}\left(\frac{q(\sqrt{N})^{2k-2}}{\sqrt{N}^{2k-2}q(\sqrt{N-\tau})^{4k}(1-q(\sqrt{N-\tau})^{2})^{2k}}\right)^{p+1}\left(\frac{1}{1 - \frac{q(\sqrt{N})^{4k-4}}{q(\sqrt{N-\tau})^{4k}}}\right)^{p+1}.
\end{align*}
It follows from the proof of \cite[Lem 3.8]{freslon2017cutoff} that
\begin{equation*}
\frac{q(\sqrt{N})^{4k-4}}{q(\sqrt{N-\tau})^{4k}}\leqslant e^{-2\tau c}
\end{equation*}
and under our assumption on $N$, Proposition \ref{prop:estimatesquantumreflectiongroup} yields (through a computation similar to that of \cite[Thm 4.4]{freslon2017cutoff})
\begin{equation*}
\frac{q(\sqrt{N})^{2k-2}}{\sqrt{N}^{2k-2}q(\sqrt{N-\tau})^{4k}(1-q(\sqrt{N-\tau})^{2})^{2k}}\leqslant e^{-2\tau c}.
\end{equation*}
Combining both estimates with Equation \eqref{eq:grouppart} and separating the term for $p=0$, we get
\begin{align*}
A_{k}(\sqrt{N-\tau}) & \leqslant \frac{e^{-2\tau c}}{1-e^{-2\tau c}} + \sum_{p=1}^{+\infty}\vert\Gamma\vert^{p} \left(e^{-2\tau c}\right)^{p}\left(\frac{e^{-2\tau c}}{1-e^{-2\tau c}}\right)^{p+1}.
\end{align*}
The series converges for $c\geqslant \frac{1}{2\tau}\ln(1+\sqrt{\vert\Gamma\vert})\geqslant\frac{1}{2\tau}\ln\left(\frac{1 + \sqrt{4\vert\Gamma\vert+1}}{2}\right)$ and
\begin{align*}
A_{k}(\sqrt{N-\tau}) & \leqslant \frac{e^{-2\tau c}}{1-e^{-2\tau c}} + \frac{e^{-2\tau c}}{1-e^{-2\tau c}}\frac{\frac{\vert\Gamma\vert e^{-4\tau c}}{1-e^{-2\tau c}}}{1 - \frac{\vert\Gamma\vert e^{-4\tau c}}{1-e^{-2\tau c}}} \\
& =\frac{e^{-2\tau c}}{1-e^{-2\tau c}}\left(1 + \frac{\vert\Gamma\vert e^{-4\tau c}}{1 - e^{-2\tau c} - \vert\Gamma\vert e^{-4\tau c}}\right).
\end{align*}
The assumption on $c$ implies that
\begin{equation*}
\frac{\vert\Gamma\vert e^{-4\tau c}}{1 - e^{-2\tau c} - \vert\Gamma\vert e^{-4\tau c}}\leqslant \frac{\vert\Gamma\vert}{(1 + \sqrt{\vert\Gamma\vert})^{2} - (1+\sqrt{\vert\Gamma\vert}) - \vert\Gamma\vert} = \sqrt{\vert\Gamma\vert}
\end{equation*}
and the claim follows.

As for the lower bound, we simply have to consider the character $\chi_{2}$. Its expectation and variance under the Haar state $h$ are respectively equal to $0$ and $1$. %Moreover, using Lemma \ref{lem:inequalityforlowerbound} with $a = 4\tau/3$,
Using the same inequality as in Lemma \ref{lem:inequalityforlowerbound}, we see that for $N\geqslant 2+\tau$
\begin{align*}
(N-1)\left(1 - \frac{\tau}{N-1}\right)^{N\ln(N)/\tau} & \geqslant \frac{N-1}{N}e^{-\frac{\ln(N)}{N-1}(1 + N\tau/2(N-1-\tau))} \\
& \geqslant \frac{3}{4}e^{-e^{-1} - e^{-1}\tau(\tau+2)/2} \\
& \geqslant \frac{1}{2}e^{-\tau(\tau+2)/5}.
\end{align*}
so that
\begin{align*}
\varphi_{N-\tau, \psi}^{\ast k}(\chi_{2}) &  = (N-1)\left(\frac{N-\tau-1}{N-1}\right)^{k} \\
& \geqslant (N-1)\left(\frac{N-\tau-1}{N-1}\right)^{N\ln(N)/\tau}e^{cN\frac{\tau}{N-1}} \\
& \geqslant \frac{1}{2}e^{-\tau(\tau+2)/5}e^{2\tau c}.
\end{align*}
Moreover, $\var_{\varphi_{N-\tau, \psi}^{\ast k}}(\chi_{2})\leqslant \|\chi_{2}\|_{\infty}^{2}\leqslant 9$ so that (using $\tau > 7/4$)
\begin{equation*}
\|\varphi_{N-\tau, \psi}^{\ast k} - h\|_{TV} \geqslant 1 - 40e^{-2\tau(\tau+2)/5}e^{4\tau c}.
\end{equation*}
\end{proof}

\bibliographystyle{amsplain}
\bibliography{../../../quantum}

\providecommand{\bysame}{\leavevmode\hbox to3em{\hrulefill}\thinspace}
\providecommand{\MR}{\relax\ifhmode\unskip\space\fi MR }
% \MRhref is called by the amsart/book/proc definition of \MR.
\providecommand{\MRhref}[2]{%
  \href{http://www.ams.org/mathscinet-getitem?mr=#1}{#2}
}
\providecommand{\href}[2]{#2}
\begin{thebibliography}{10}

\bibitem{andrews1998theory}
G.~E. Andrews, \emph{{The theory of partitions}}, Cambridge Mathematical
  Library, Cambridge university press, 1998.

\bibitem{banica1997groupe}
T.~Banica, \emph{{Le groupe quantique compact libre $U(n)$}}, Comm. Math. Phys.
  \textbf{190} (1997), no.~1, 143--172.

\bibitem{banica2009fusion}
T.~Banica and R.~Vergnioux, \emph{Fusion rules for quantum reflection groups},
  J. Noncommut. Geom. \textbf{3} (2009), no.~3, 327--359.

\bibitem{bichon2004free}
J.~Bichon, \emph{Free wreath product by the quantum permutation group}, Algebr.
  Represent. Theory \textbf{7} (2004), no.~4, 343--362.

\bibitem{bichon2006ergodic}
J.~Bichon, A.~De~Rijdt, and S.~Vaes, \emph{{Ergodic coactions with large
  multiplicity and monoidal equivalence of quantum groups}}, Comm. math. phys.
  \textbf{262} (2006), no.~3, 703--728.

\bibitem{brannan2011approximation}
M.~Brannan, \emph{{Approximation properties for free orthogonal and free
  unitary quantum groups}}, J. Reine Angew. Math. \textbf{672} (2012),
  223--251.

\bibitem{freslon2013ccap}
K.~De~Commer, A.~Freslon, and M.~Yamashita, \emph{{CCAP for universal discrete
  quantum groups}}, Comm. Math. Phys. \textbf{331} (2014), no.~2, 677--701.

\bibitem{diaconis1981generating}
P.~Diaconis and M.~Shahshahani, \emph{{Generating a random permutation with
  random transpositions}}, Prob. Theory Related Fields \textbf{57} (1981),
  no.~2, 159--179.

\bibitem{freslon2012examples}
A.~Freslon, \emph{{Examples of weakly amenable discrete quantum groups}}, J.
  Funct. Anal. \textbf{265} (2013), no.~9, 2164--2187.

\bibitem{freslon2017cutoff}
\bysame, \emph{{Cut-off phenomenon for random walks on free orthogonal quantum
  groups}}, Probab. Theory Related Fields (2018).

\bibitem{freslon2017torsion}
A.~Freslon and R.~Martos, \emph{{Torsion and K-theory for some free wreath
  products}}, Int. Math. Res. Not. (2018).

\bibitem{hough2017cut}
B.~Hough and Y.~Jiang, \emph{{Cut-off phenomenon in the uniform plane Kac
  walk}}, Ann. Probab. \textbf{45} (2017), no.~4, 2248--2308.

\bibitem{lemeux2014free}
F.~Lemeux and P.~Tarrago, \emph{{Free wreath product quantum groups : the
  monoidal category, approximation properties and free probability}}, J. Funct.
  Anal. \textbf{270} (2016), no.~10, 3828--3883.

\bibitem{maccarthy2017random}
J.P. McCarthy, \emph{{Random walks on finite quantum groups :
  Diaconis-Shahshahani theory for quantum groups}}, Ph.D. thesis, National
  University of Ireland, Cork.

\bibitem{nica2006lectures}
A.~Nica and R.~Speicher, \emph{{Lectures on the combinatorics of free
  probability}}, Lecture note series, vol. 335, London Mathematical Society,
  2006.

\bibitem{porod1996cutII}
U.~Porod, \emph{{The cut-off phenomenon for random reflections II : Complex and
  quaternionic cases}}, Prob. Theory Related Fields \textbf{104} (1996), no.~2,
  181--209.

\bibitem{rosenthal1994random}
J.S. Rosenthal, \emph{{Random rotations : characters and random walks on
  $SO(n)$}}, Ann. Probab. \textbf{22} (1994), no.~1, 398--423.

\bibitem{vergnioux2013k}
R.~Vergnioux and C.~Voigt, \emph{{The K-theory of free quantum groups}}, Math.
  Ann. \textbf{357} (2013), no.~1, 355--400.

\bibitem{wang1995free}
S.~Wang, \emph{Free products of compact quantum groups}, Comm. Math. Phys.
  \textbf{167} (1995), no.~3, 671--692.

\bibitem{woronowicz1987twisted}
S.L Woronowicz, \emph{{Twisted $SU(2)$ group. An example of a non-commutative
  differential calculus}}, RIMS \textbf{23} (1987), no.~1, 117--181.

\end{thebibliography}

\end{document}